\newtheorem{thm}{Theorem}
\newtheorem{cor}[thm]{Corollary}
\newtheorem{lem}[thm]{Lemma}
\newtheorem{prop}[thm]{Proposition}
\newtheorem{fact}[thm]{Fact}
\newtheorem{defn}[thm]{Definition}
\theoremstyle{definition}
\newtheorem{examp}{Example}
\newcommand{\rr}{\mathbb{R}}
\newcommand{\nn}{\mathbb{N}}
\newcommand{\ee}{\varepsilon}
\newcommand{\sbs}{\mathrm{SB}}
\newcommand{\us}{\mathrm{US}}
\newcommand{\txtwo}{T^{\mathfrak{X}}_2}
\newcommand{\txco}{T^{\mathfrak{X}}_0}
\newcommand{\stblng}{\mathfrak{X}=(X,\Lambda,T,(x_t)_{t\in T})}
\newcommand{\sg}{\sigma}
\newcommand{\seg}{\mathfrak{s}}
\newcommand{\ospan}{\overline{\mathrm{span}}}
\newcommand{\llll}{\mathcal{L}}
\newcommand{\ccc}{\mathcal{C}}
\newcommand{\xxx}{\mathcal{X}}
\newcommand{\supp}{\mathrm{supp}}
\newcommand{\range}{\mathrm{range}}
\newcommand{\aaa}{\mathcal{A}}
\begin{document}

\title{On unconditionally saturated Banach spaces}
\author{Pandelis Dodos and Jordi Lopez-Abad}

\address{Universit\'{e} Pierre et Marie Curie - Paris 6, Equipe d' Analyse
Fonctionnelle, Bo\^{i}te 186, 4 place Jussieu, 75252 Paris Cedex 05, France.}
\email{pdodos@math.ntua.gr}

\address{Universit\'{e} Denis Diderot - Paris 7, Equipe de Logique
Math\'{e}matiques, 2 place Jussieu, 72521 Paris Cedex 05, France.}
\email{abad@logique.jussieu.fr}

\footnotetext[1]{2000 \textit{Mathematics Subject Classification}. Primary 46B03, 46B15;
Secondary 03E15, 46B07.}
\footnotetext[2]{\textit{Key words}: unconditional basic sequence, strongly
bounded classes, $\llll_\infty$-spaces. }

\maketitle

%------------------------Abstract-------------------------------%

\begin{abstract}
We prove a structural property of the class of unconditionally
saturated separable Banach spaces. We show, in particular,
that for every analytic set $\aaa$, in the Effros-Borel space of
subspaces of $C[0,1]$, of unconditionally saturated separable
Banach spaces, there exists an unconditionally saturated Banach
space $Y$, with a Schauder basis, that contains isomorphic copies
of every space $X$ in the class $\aaa$.
\end{abstract}

%--------------------------Introduction-----------------------%

\section{Introduction}

\noindent \textbf{(A)} An infinite-dimensional Banach space $X$ is said
to be \textit{unconditionally saturated} if every infinite-dimensional
subspace $Y$ of $X$ contains an unconditional basic sequence. Although
by the discovery of W. T. Gowers and B. Maurey \cite{GM} not every
separable Banach space is unconditionally saturated, this class of
spaces is quite extensive, includes the ``classical" ones and has
some desirable closure properties (it is closed, for instance, under
taking subspaces and finite sums). Most important is the fact that
within the class of unconditionally saturated spaces one can develop
a strong structural theory. Among the numerous results found in the
literature, there are two fundamental ones that deserve special
attention. The first is due to R. C. James \cite{Ja1} and asserts
that any unconditionally saturated space contains either a reflexive
subspace, or $\ell_1$, or $c_0$. The second is due to A. Pe{\l}czy\'{n}ski
\cite{P} and provides a space $U$ with an unconditional basis $(u_n)$
with the property that any other unconditional basic sequence
$(x_n)$, in some Banach space $X$, is equivalent to a subsequence of $(u_n)$.
\medskip

\noindent \textbf{(B)} The main goal of this paper is to exhibit
yet another structural property of the class of unconditionally saturated
spaces which is of a global nature. To describe this property we need
first to recall some standard facts. Quite often one needs a convenient
way to treat separable Banach spaces as a unity. Such a way has been
proposed by B. Bossard \cite{Bos} and has been proved to be extremely
useful. More precisely, let us denote by $F\big(C[0,1]\big)$ the set
of all closed subspaces of the space $C[0,1]$ and let us consider the set
\begin{equation}
\label{e1} \sbs=\big\{X\in F\big(C[0,1]\big): X \text{ is a linear subspace}\big\}.
\end{equation}
It is easy to see that the set $\sbs$ equipped with the
relative Effros-Borel structure becomes a standard Borel space
(see \cite{Bos} for more details). As $C[0,1]$ is isometrically
universal for all separable Banach spaces, we may identify any
class of separable Banach spaces with a subset of $\sbs$.
Under this point of view, we denote by $\us$ the subset of
$\sbs$ consisting of all $X\in\sbs$ which are unconditionally
saturated.

The above identification is ultimately related to universality
problems in Banach Space Theory (see \cite{AD}, \cite{DF}, \cite{D}).
The connection is crystalized in the following definition, introduced
in \cite{AD}.
\begin{defn}
\label{ind1} A class $\ccc\subseteq \sbs$ is said to be strongly
bounded if for every analytic subset $\aaa$ of $\mathcal{C}$ there
exists $Y\in\mathcal{C}$ that contains isomorphic copies of every
$X\in \aaa$.
\end{defn}

In \cite[Theorem 91(5)]{AD} it was shown that the class of unconditionally
saturated Banach spaces with a Schauder basis is strongly bounded.
We remove the assumption of the existence of a basis and we show
the following.
\begin{thm}
\label{int1} Let $\aaa$ be an analytic subset of $\us$. Then
there exists an unconditionally saturated Banach space $Y$,
with a Schauder basis, that contains isomorphic copies of every
$X\in\aaa$.

In particular, the class $\us$ is strongly bounded.
\end{thm}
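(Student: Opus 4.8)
The plan is to reduce the theorem to the case of spaces with a basis, which is \cite[Theorem 91(5)]{AD}, by attaching to each $X\in\us$ a larger space that \emph{does} have a Schauder basis but is still unconditionally saturated, and by carrying this out uniformly enough that the family of larger spaces remains analytic. The tool for this is the Bourgain--Pisier construction of $\llll_\infty$-spaces, in the parameterized form developed in the theory of strongly bounded classes: to every separable Banach space $X$ one associates an $\llll_{\infty,C}$-space $L_X$ (for an absolute constant $C$) such that $L_X$ has a Schauder basis --- this is part of the construction and does not require $X$ itself to have one, since the building blocks are finite-dimensional --- such that $X$ embeds isometrically into $L_X$, and such that the quotient $L_X/X$ has the Schur property (and the Radon--Nikodym property). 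Moreover, if $\aaa\subseteq\sbs$ is analytic then $\{L_X:X\in\aaa\}$ is again analytic in $\sbs$. The definable character of the amalgamation performed at each step of the construction is what makes this parameterization possible, and establishing it carefully is the main technical obstacle.

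The heart of the matter is the following claim: \emph{if $X$ is unconditionally saturated, then so is $L_X$}. Let $W$ be an infinite-dimensional subspace of $L_X$ and let $q\colon L_X\to L_X/X$ be the quotient map. If $W$ contains an isomorphic copy of $\ell_1$ we are done, since $\ell_1$ has an unconditional basis. Otherwise, $\ell_1\not\hookrightarrow W$, so by Rosenthal's $\ell_1$ theorem every bounded sequence in $W$ has a weakly Cauchy subsequence; starting from a normalized basic sequence in $W$, passing to a weakly Cauchy subsequence, and taking consecutive differences, we obtain (after normalizing --- consecutive differences of a basic sequence are bounded below in norm) a normalized weakly null sequence $(w_n)$ in $W$. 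Then $(q(w_n))$ is weakly null in $L_X/X$, and since $L_X/X$ has the Schur property, $\mathrm{dist}(w_n,X)=\|q(w_n)\|\to 0$. Choosing $x_n\in X$ with $\|w_n-x_n\|\to 0$ and passing to a subsequence, $(x_n)$ becomes a basic sequence in $X$ equivalent to $(w_n)$, so $\ospan(w_n)$ is isomorphic to a subspace of $X$. As $X$ is unconditionally saturated, that subspace --- hence $W$ --- contains an unconditional basic sequence. This proves the claim, so that $L_X\in\us$ for every $X\in\us$.

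To conclude, let $\aaa$ be an analytic subset of $\us$ and put $\aaa'=\{L_X:X\in\aaa\}$. By the above, $\aaa'$ is an analytic subset of $\sbs$ consisting of unconditionally saturated spaces, each of which has a Schauder basis; thus $\aaa'$ is an analytic subset of the class considered in \cite[Theorem 91(5)]{AD}. Applying that theorem to $\aaa'$ yields an unconditionally saturated Banach space $Y$ with a Schauder basis which contains an isomorphic copy of every member of $\aaa'$. Since each $X\in\aaa$ embeds isometrically into $L_X\in\aaa'$, the space $Y$ contains an isomorphic copy of every $X\in\aaa$, which is the first assertion of the theorem. Finally, the space $Y$ so produced belongs to $\us$, so this also shows that $\us$ is strongly bounded in the sense of Definition \ref{ind1}.
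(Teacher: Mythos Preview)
Your argument is correct, but it diverges from the paper's at two points, and the second of these is worth flagging. First, to show that $L_X$ is unconditionally saturated you run Rosenthal's $\ell_1$ theorem to produce a weakly null sequence and then invoke the Schur property of $L_X/X$; the paper instead uses a direct dichotomy on the quotient map (Lemma~\ref{l24}): either $q|_W$ fails to be strictly singular, in which case $W$ embeds into the Schur (hence hereditarily~$\ell_1$) quotient, or it is strictly singular, in which case one builds a basic sequence in $W$ equivalent to one in $X$. Both routes are standard. Second, and more substantively, after obtaining the analytic class $\aaa'$ of unconditionally saturated spaces with a basis you quote \cite[Theorem~91(5)]{AD} as a black box, whereas the paper does \emph{not} cite that result in the proof: it passes from $\aaa'$ to a Schauder tree basis via Lemma~\ref{l41}, forms the $\ell_2$ Baire sum $\txtwo$, and verifies directly that $\txtwo$ is unconditionally saturated by splitting subspaces into $X$-singular and non-$X$-singular and appealing to Theorem~\ref{t37}. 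Your reduction is legitimate and shorter, but the paper's route is self-contained and is what actually motivates the structural results of \S3 (Theorems~\ref{t37} and \ref{t38}); by black-boxing \cite{AD} you bypass exactly the part of the paper that is new. One minor point: your claim that $L_X$ has a Schauder basis ``since the building blocks are finite-dimensional'' is not quite a proof; the paper obtains this from the Johnson--Rosenthal--Zippin theorem that every separable $\llll_\infty$-space has a basis.
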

We should point out that the above result is optimal. Indeed,
it follows by a classical construction of J. Bourgain
\cite{Bou1} that there exists a co-analytic subset $\mathcal{B}$
of $\sbs$ consisting of reflexive and unconditionally
saturated separable Banach spaces with the following property.
If $Y$ is a separable space that contains an isomorphic copy
of every $X\in\mathcal{B}$, then $Y$ must contain every
separable Banach space. In particular, there is no
unconditionally saturated separable Banach space containing
isomorphic copies of every $X\in\mathcal{B}$.
\medskip

\noindent \textbf{(C)} By the results in \cite{AD}, the proof of
Theorem \ref{int1} is essentially reduced to an embedding problem.
Namely, given an unconditionally saturated separable Banach space
$X$ one is looking for an unconditionally saturated space $Y(X)$,
with a Schauder basis, that contains an isomorphic copy of $X$.
In fact, for the proof of Theorem \ref{int1}, one has to know
additionally that this embedding is ``uniform". This means,
roughly, that the space $Y(X)$ is constructed from
$X$ in a Borel way. In our case, the embedding problem has been
already solved by J. Bourgain and G. Pisier in \cite{BP}, while
its uniform version has been recently obtained in \cite{D}.
These are the main ingredients of the proof of Theorem \ref{int1}.
\medskip

\noindent \textbf{(D)} At a more technical level, the paper also
contains some results concerning the structure of a class of subspaces
of a certain space constructed in \cite{AD} and called as an $\ell_2$
Baire sum. Specifically, we study the class of $X$-singular subspaces
of an $\ell_2$ Baire sum and we show the following (see \S 3.1 for the
relevant definitions).
\medskip

(1) Every $X$-singular subspace is unconditionally
saturated (Theorem \ref{t37} in the main text).
\medskip

(2) Every $X$-singular subspace contains an $X$-compact
subspace (Corollary \ref{c312} in the main text). This answers a question
from \cite{AD} (see \cite[Remark 3]{AD}).
\medskip

(3) Every normalized basic sequence in an $X$-singular
subspace has a normalized block subsequence satisfying an upper
$\ell_2$ estimate (Theorem \ref{t38} in the main text). Hence, an
$X$-singular subspace can contain no $\ell_p$ for $1\leq p<2$.
This generalizes the fact that the 2-stopping time Banach space
(see \cite{BO}) can contain no $\ell_p$ for $1\leq p<2$.

\subsection{General notation and terminology} By $\nn=\{0,1,2,...\}$
we shall denote the natural numbers. For every infinite subset
$L$ of $\nn$, by $[L]$ we denote the set of all infinite
subsets of $L$. Our Banach space theoretic notation and
terminology is standard and follows \cite{LT}, while
our descriptive set theoretic terminology follows \cite{Kechris}.
If $X$ and $Y$ are Banach spaces, then we shall denote
the fact that $X$ and $Y$ are isomorphic by $X\cong Y$.

For the convenience of the reader, let us recall the following
notions. A measurable space $(X,S)$ is said to be a
\textit{standard Borel space} if there exists a Polish
topology\footnote[1]{A topology $\tau$ on a set $X$ is said
to be Polish if the space $(X,\tau)$ is a separable
and completely metrizable topological space.} $\tau$ on $X$
such that the Borel $\sigma$-algebra of $(X,\tau)$ coincides
with $S$. A subset $B$ of a standard Borel space $(X,S)$ is
said to be \textit{analytic} if there exists a Borel map
$f:\nn^\nn\to X$ such that $f(\nn^\nn)=B$. Finally,
a seminormalized sequence $(x_n)$ in a
Banach space $X$ is said to be \textit{unconditional}
if there exists a constant $C>0$ such that for every $k\in\nn$,
every $F\subseteq \{0,...,k\}$ and every $a_0,...,a_k\in\rr$
we have
\begin{equation}
\label{e13} \big\| \sum_{n\in F} a_n x_n \big\| \leq C \|\sum_{n=0}^k a_n x_n\|.
\end{equation}

\subsection{Trees} The concept of a tree has been proved to
be a very fruitful tool in the Geometry of Banach spaces.
It is also decisive throughout this work. Below we gather all
the conventions concerning trees that we need.

Let $\Lambda$ be a non-empty set. By $\Lambda^{<\nn}$ we shall
denote the set of all \textit{non-empty}\footnote[2]{We should
point out that in many standard textbooks, as for instance in
\cite{Kechris}, the empty sequence is included in $\Lambda^{<\nn}$.
We do not include the empty sequence for technical reasons that
will become transparent in \S 3.} finite sequences in $\Lambda$.
By $\sqsubset$ we shall denote the (strict) partial order on $\Lambda^{<\nn}$
of end-extension. For every $\sg\in \Lambda^\nn$ and every $n\in\nn$
with $n\geq 1$ we set $\sg|n=\big(\sg(0),..., \sg(n-1)\big)\in\Lambda^{<\nn}$.
Two nodes $s,t\in\Lambda^{<\nn}$ are said to be \textit{comparable}
if either $s\sqsubseteq t$ or $t\sqsubseteq s$; otherwise they are
said to be \textit{incomparable}. A subset of $\Lambda^{<\nn}$
consisting of pairwise comparable nodes is said to be a
\textit{chain}, while a subset of $\Lambda^{<\nn}$ consisting
of pairwise incomparable nodes is said to be an \textit{antichain}.

A \textit{tree} $T$ on $\Lambda$ is a subset of $\Lambda^{<\nn}$
satisfying
\begin{equation}
\label{e2} \forall s,t\in\Lambda^{<\nn} \ (t\in T \text{ and }
s\sqsubset t\Rightarrow s\in T).
\end{equation}
A tree $T$ is said to be \textit{pruned} if for every
$s\in T$ there exists $t\in T$ with $s\sqsubset t$.
The \textit{body} $[T]$ of a tree $T$ on $\Lambda$ is
defined to be the set $\{\sg\in\Lambda^\nn: \sg|n\in T
\ \forall n\geq 1\}$. Notice that if $T$ is pruned, then
$[T]\neq\varnothing$. A \textit{segment} $\seg$ of a tree
$T$ is a chain of $T$ satisfying
\begin{equation}
\label{e3} \forall s,t,w\in \Lambda^{<\nn} \ (s\sqsubseteq w
\sqsubseteq t \ \text{ and } s,t\in\seg\Rightarrow w\in\seg).
\end{equation}
If $\seg$ is a segment of $T$, then by $\min(\seg)$ we denote
the $\sqsubseteq$-minimum node $t\in\seg$. We say that two
segments $\seg$ and $\seg'$ of $T$ are \textit{incomparable}
if for every $t\in\seg$ and every $t'\in\seg'$ the nodes $t$
and $t'$ are incomparable (notice that this is equivalent to
say that $\min(\seg)$ and $\min(\seg')$ are incomparable).

%-------------------------------------------------------------%
%       Embedding unconditional saturated spaces into         %
%                      spaces with a basis                    %
%-------------------------------------------------------------%

\section{Embedding unconditionally saturated spaces into spaces
with a basis}

The aim of this section is to give the proof of the following
result.
\begin{prop}
\label{p21} Let $\aaa$ be an analytic subset of $\us$. Then there
exists an analytic subset $\aaa'$ of $\us$ with the following
properties.
\begin{enumerate}
\item[(i)] For every $Y\in \aaa'$ the space $Y$ has a Schauder
basis\footnote[3]{Throughout the paper, when we say that a
Banach space $X$ has a Schauder basis, then we implicitly
assume that $X$ is infinite-dimensional.}.
\item[(ii)] For every $X\in \aaa$ there exists $Y\in \aaa'$
that contains an isometric copy of $X$.
\end{enumerate}
\end{prop}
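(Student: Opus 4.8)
The plan is to let $\aaa'$ consist of (isometric copies of) the Bourgain--Pisier spaces $L_\infty[X]$, $X\in\aaa$, realized uniformly by the Borel version of that construction. Concretely, three things must be checked: that each $L_\infty[X]$ has a Schauder basis; that $L_\infty[X]$ is unconditionally saturated whenever $X$ is; and that the resulting collection of spaces is an analytic subset of $\sbs$. Given these, $\aaa'$ will satisfy (i) and (ii), since $X$ embeds isometrically into $L_\infty[X]$.

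I would first assemble the input from the literature. For a separable space $X$, the construction of \cite{BP} produces an $\mathcal{L}_\infty$-space $L_\infty[X]$ together with an isometric embedding $X\hookrightarrow L_\infty[X]$ such that the quotient $L_\infty[X]/X$ has the Schur property (and the Radon--Nikod\'ym property); moreover $L_\infty[X]$ is the closure of an increasing sequence of finite-dimensional subspaces constituting a finite-dimensional decomposition, and hence carries a Schauder basis (in particular it is infinite-dimensional). By the uniform version of this construction established in \cite{D}, the assignment $X\mapsto L_\infty[X]$ is induced by a Borel map $\Phi\colon\sbs\to\sbs$, in the sense that $\Phi(X)$ is linearly isometric to $L_\infty[X]$ and contains an isometric copy of $X$. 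Then $\aaa':=\Phi(\aaa)$ is analytic, being the image of an analytic set under a Borel map; every $Y\in\aaa'$ has a Schauder basis; and every $X\in\aaa$ embeds isometrically into $\Phi(X)\in\aaa'$. It thus remains only to verify that $\aaa'\subseteq\us$, i.e.\ that $L_\infty[X]\in\us$ whenever $X\in\us$.

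For the saturation I would combine two observations. First, any infinite-dimensional space $V$ with the Schur property is $\ell_1$-saturated, hence unconditionally saturated: for an infinite-dimensional $W\subseteq V$, pick a normalized basic sequence $(w_n)$ in $W$; by Rosenthal's $\ell_1$-theorem, either a subsequence of $(w_n)$ is equivalent to the unit vector basis of $\ell_1$ and we are done, or $(w_n)$ has a weakly Cauchy subsequence $(w_{n_k})$ --- but then the vectors $w_{n_{2k}}-w_{n_{2k+1}}$ are weakly null, hence norm null by the Schur property, while they stay bounded away from $0$ because $(w_n)$ is basic, a contradiction. In particular $L_\infty[X]/X$ is unconditionally saturated. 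Second, I would use the standard dichotomy for the quotient map $q\colon L_\infty[X]\to L_\infty[X]/X$: given an infinite-dimensional $Z\subseteq L_\infty[X]$, either $q$ is bounded below on some infinite-dimensional $W\subseteq Z$ --- in which case $W$ is isomorphic to an infinite-dimensional subspace of the Schur space $L_\infty[X]/X$ and so contains an unconditional basic sequence --- or $q|_Z$ is strictly singular, in which case $Z$ contains a normalized basic sequence $(w_n)$ with $\mathrm{dist}(w_n,X)$ decaying as fast as we wish, and a routine small-perturbation argument yields that $\ospan\{w_n\}$ is $2$-isomorphic to a subspace of $X$, which is unconditionally saturated, so $\ospan\{w_n\}$ again contains an unconditional basic sequence. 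In either case $Z$ contains an unconditional basic sequence, whence $L_\infty[X]\in\us$.

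In this scheme the genuinely substantial content is imported: the construction of $L_\infty[X]$ and the facts that $L_\infty[X]/X$ is Schur and that $L_\infty[X]$ has a finite-dimensional decomposition are from \cite{BP}, while the crucial point that $X\mapsto L_\infty[X]$ is realized by a Borel map --- which is exactly what makes $\aaa'$ analytic --- is the uniform version obtained in \cite{D}. The step one would expect to be the main obstacle, this Borel uniformity, is therefore available off the shelf; what is carried out above (the saturation argument and the assembly of $\aaa'$) is soft. One could alternatively obtain the unconditional saturation of $L_\infty[X]$ from the analysis of $X$-singular subspaces in \S 3 (cf.\ Theorem \ref{t37}), but the route through the Schur property of the quotient $L_\infty[X]/X$ is the most economical.
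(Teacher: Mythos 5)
Your overall scheme coincides with the paper's: pass to the Bourgain--Pisier spaces $\llll_2[X]$ for $X\in\aaa$, use the parameterized version from \cite{D} to make the resulting family analytic, get (ii) from the isometric embedding $X\hookrightarrow\llll_2[X]$, and deduce unconditional saturation from the Schur property of the quotient $\llll_2[X]/X$ together with the dichotomy for the quotient map (your second observation is precisely Lemma \ref{l24} of the paper, proved the same way). That part of your argument is correct.

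There is, however, a genuine gap in your justification of property (i). You assert that $\llll_\infty[X]$ ``is the closure of an increasing sequence of finite-dimensional subspaces constituting a finite-dimensional decomposition, and hence carries a Schauder basis''. Neither half of this is valid: the Bourgain--Pisier construction produces a dense increasing union of finite-dimensional subspaces uniformly close to $\ell_\infty^n$'s, but an increasing chain of finite-dimensional subspaces with dense union is not automatically a finite-dimensional decomposition (one needs uniformly bounded projections compatible with the chain, which the construction does not supply); and even granting an FDD, the implication ``FDD $\Rightarrow$ Schauder basis'' is false in general (Szarek constructed a space with an FDD but no basis). The ingredient actually needed is the theorem of Johnson, Rosenthal and Zippin \cite{JRZ} that every separable $\llll_\infty$-space has a Schauder basis; this is exactly how the paper obtains (i), using that $\llll_2[X]$ is a separable $\llll_{\infty,2+}$-space. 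A smaller point: the parameterized result of \cite{D} as quoted here (Theorem \ref{t23}) says that the relation $\big\{(X,Y): Y \text{ is isometric to } \llll_2[X]\big\}$ is analytic, not that $X\mapsto \llll_2[X]$ is induced by a Borel map; your $\aaa'$ should therefore be defined as the projection $\{Y: \exists X\in\aaa \ (X,Y)\in\llll_2\}$, which is analytic because analytic sets are closed under projections --- the Borel selection you posit is a stronger statement than what the cited theorem provides, and it is not needed.
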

As we have already mention in the introduction, the proof of Proposition
\ref{p21} is based on a construction of $\llll_\infty$-spaces due to
J. Bourgain and G. Pisier \cite{BP}, as well as, on its parameterized
version which has been recently obtained in \cite{D}.

Let us recall, first, some definitions. If $X$ and $Y$ are two isomorphic Banach
spaces (not necessarily infinite-dimensional), then their \textit{Banach-Mazur
distance} is defined by
\begin{equation}
\label{e4} d(X,Y)=\inf\big\{ \|T\|\cdot \|T^{-1}\|: T:X\to Y
\text{ is an isomorphism}\big\}.
\end{equation}
Let now $X$ be an infinite-dimensional Banach space and $\lambda\geq 1$.
The space $X$ is said to be a $\llll_{\infty,\lambda}$-space if for every
finite-dimensional subspace $F$ of $X$ there exists a finite-dimensional
subspace $G$ of $X$ with $F\subseteq G$ and $d(G,\ell^n_\infty)\leq\lambda$,
where $n=\mathrm{dim}(G)$. The space $X$ is said to be a
$\llll_{\infty,\lambda+}$-space if it is a $\llll_{\infty,\theta}$-space
for every $\theta>\lambda$. Finally, $X$ is said to be a
$\llll_{\infty}$-space if it is $\llll_{\infty,\lambda}$ for
some $\lambda\geq 1$. The class of $\llll_{\infty}$-spaces
was defined by J. Lindenstrauss and A. Pe{\l}czy\'{n}ski \cite{LP}.
For a comprehensive account of the theory of $\llll_\infty$-spaces,
as well as, for a presentation of many remarkable examples
we refer to the monograph of J. Bourgain \cite{Bou2}.

Let us also recall that a Banach space $X$ is said to have the
\textit{Schur} property if every weakly convergent sequence in $X$
is automatically norm convergent. It in an immediate consequence
of Rosenthal's Dichotomy \cite{Ro} that every space $X$ with the
Schur property is hereditarily $\ell_1$; that is, every subspace
$Y$ of $X$ has a further subspace isomorphic to $\ell_1$ (hence,
every space with the Schur property is unconditionally saturated).

The following theorem summarizes some of the basic properties of
the Bourgain-Pisier construction.
\begin{thm}[\cite{BP}, Theorem 2.1]
\label{t22} Let $\lambda>1$ and $X$ be a separable Banach space.
Then there exists a separable $\llll_{\infty,\lambda+}$-space,
denote by $\llll_\lambda[X]$, which contains $X$ isometrically
and is such that the quotient $\llll_\lambda[X]/X$ has the
Radon-Nikodym and the Schur properties.
\end{thm}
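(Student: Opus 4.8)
The plan is to realize $\llll_\lambda[X]$ as the closure of an increasing union of finite-dimensional spaces produced by iterating a single finite-dimensional extension lemma; this is the scheme of Bourgain and Pisier. Fix once and for all a contraction parameter $\theta\in(0,1)$ coupled to $\lambda$ at the end (the calibration will force a relation of the form $\lambda>(1-\theta)^{-1}$, i.e. $\theta<(\lambda-1)/\lambda$). The technical heart, which I would establish first, is the following one-step statement: for every finite-dimensional normed space $E$ there exist a finite-dimensional space $W$ and an isometric embedding $E\hookrightarrow W$ such that (a) $d(W,\ell_\infty^{m})\leq\lambda$ with $m=\dim W$, so that $W$ is, up to $\lambda$, a cube; and (b) the quotient $W/E$ is spanned by the images of finitely many norm-one vectors $f_1,\dots,f_k\in W$ obeying a lower $\ell_1$-estimate $\|\sum_j a_j(f_j+E)\|_{W/E}\geq\theta\sum_j|a_j|$, i.e. the new directions are "$\theta$-contracted" in the quotient. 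I would prove this lemma by embedding $E$ isometrically into an $\ell_\infty^{n}$ built from a sufficiently fine net of norming functionals on the dual ball of $E$, and then enriching this family with auxiliary functionals so that the enlarged space is genuinely (up to $\lambda$) an $\ell_\infty^{m}$ while the adjoined directions remain $\theta$-small in the quotient; this is exactly where the techniques of Lindenstrauss--Pe{\l}czy\'{n}ski and Johnson--Rosenthal--Zippin for building $\llll_\infty$-spaces are needed.

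Granting the lemma, I would run the construction as follows. Write $X=\overline{\bigcup_n E_n}$ with $(E_n)$ an increasing sequence of finite-dimensional subspaces, set $Z_0=E_0$, and at stage $n$ apply the lemma to $\sspan(Z_{n-1}\cup E_n)$ to obtain $Z_n\supseteq Z_{n-1}$ with $Z_{n-1}\hookrightarrow Z_n$ isometric, $d(Z_n,\ell_\infty^{\dim Z_n})\leq\lambda$, and property (b) relative to $Z_{n-1}$. Define $\llll_\lambda[X]=\overline{\bigcup_n Z_n}$. Since every inclusion is isometric, $X$ sits isometrically in $\llll_\lambda[X]$. The $\llll_{\infty,\lambda+}$-property is immediate: any finite-dimensional $F\subseteq\llll_\lambda[X]$ is, for each $\ee>0$, contained up to $\ee$ in some $Z_n$, and since $Z_n$ already satisfies $d(Z_n,\ell_\infty^{\dim Z_n})\leq\lambda$, a standard perturbation yields a finite-dimensional $G\supseteq F$ with $d(G,\ell_\infty^{\dim G})\leq\lambda+\ee$. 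Letting $\ee\to 0$ gives $\llll_{\infty,\theta}$ for every $\theta>\lambda$, as required.

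It remains to analyze the quotient $Q=\llll_\lambda[X]/X$. Since $X\subseteq Z_n$ for all $n$, the inclusions $Z_n/X\hookrightarrow Z_{n+1}/X$ are isometric (the quotient norm $\|z+X\|=\inf_{x\in X}\|z-x\|$ is computed the same way at every level), so $Q=\overline{\bigcup_n Z_n/X}$ carries a finite-dimensional decomposition with blocks $Z_n/Z_{n-1}$. Here property (b) is decisive: the per-stage $\theta$-contraction, applied with the \emph{same} constant $\theta$ at every level, forces this decomposition to satisfy a uniform lower $\ell_1$-estimate (the upper $\ell_1$-estimate being automatic from the triangle inequality), so that $Q$ is isomorphic to an $\ell_1$-sum $\big(\sum_n\oplus H_n\big)_{\ell_1}$ of finite-dimensional spaces. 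Both conclusions then follow from classical facts about such sums: each summand is finite-dimensional and hence has the Radon--Nikodym property, which is stable under $\ell_1$-sums; and a normalized weakly null sequence in an $\ell_1$-sum of finite-dimensional spaces must, by a gliding-hump argument, have increments escaping along the decomposition, contradicting the lower $\ell_1$-estimate and thereby yielding the Schur property. I would carry out the two verifications by the direct martingale and gliding-hump arguments that this $\ell_1$-structure supplies.

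The main obstacle is the one-step extension lemma, and specifically the simultaneity of (a) and (b). It is easy to embed $E$ isometrically into some $\ell_\infty^{n}$, and easy to arrange a small quotient in isolation, but keeping the target a near-cube while contracting the newly adjoined directions in the quotient by a \emph{fixed} $\theta<1$ is the genuinely quantitative core of the construction. Once this lemma is available with uniform constants, the passage to the limit and the verification of all three conclusions are comparatively routine.
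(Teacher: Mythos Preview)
This theorem is not proved in the paper; it is quoted from \cite{BP} and used as a black box, so there is no in-paper argument to compare your proposal against. Your outline is recognizably the original Bourgain--Pisier construction and is correct in its broad strokes: iterated finite-dimensional extensions, the $\llll_{\infty,\lambda+}$ property read off from the near-cube structure of the building blocks, and the quotient realized as an $\ell_1$-sum of finite-dimensional pieces, whence Schur and RNP.

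Two imprecisions are worth flagging. First, you write ``since $X\subseteq Z_n$ for all $n$'' and then form $Z_n/X$, but each $Z_n$ is finite-dimensional while $X$ need not be; what one actually has is $E_n\subseteq Z_n$, and the quotient $\llll_\lambda[X]/X$ is analyzed through the increments $Z_n/\sspan(Z_{n-1}\cup E_n)$, which are the pieces carrying the uniform $\theta$-lower $\ell_1$ estimate. Second, the genuine Bourgain--Pisier one-step lemma is a push-out (amalgamation) statement: given an $\eta$-isometric embedding $u\colon S\to E$ with $S$ a subspace of some $\ell_\infty^N$, one constructs a superspace $F\supseteq E$ isometrically together with an extension $\tilde u\colon \ell_\infty^N\to F$ that is again an $\eta$-isometry, with $F/E$ isometric to $\ell_\infty^N/S$. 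This differs from your ``embed $E$ isometrically into a near-cube $W$'' formulation; the push-out form is precisely what lets you amalgamate $Z_{n-1}$ with the next piece $E_n$ of $X$ at each stage and what delivers the isometric identification of the successive quotients needed for the $\ell_1$-sum structure of $\llll_\lambda[X]/X$. With these corrections your sketch matches the proof in \cite{BP}.
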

The parameterized version of Theorem \ref{t22} reads as follows.
\begin{thm}[\cite{D}, Theorem 16]
\label{t23} For every $\lambda>1$, the set $\llll_\lambda\subseteq
\sbs\times\sbs$ defined by
\[ (X,Y)\in\llll_\lambda \Leftrightarrow Y \text{ is isometric to }
\llll_\lambda[X]\]
is analytic.
\end{thm}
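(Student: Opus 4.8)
The plan is to upgrade the existence statement of Theorem \ref{t22} to a Borel parameterization and then read off analyticity of the relation. Concretely, I would produce a Borel map $\Phi\colon\sbs\to\sbs$ with the property that $\Phi(X)$ is isometric to $\llll_\lambda[X]$ for every $X$, and combine it with the (standard) fact that the isometry relation
\[ \mathrm{Isom}=\big\{(Z,Y)\in\sbs\times\sbs: Z \text{ is isometric to } Y\big\} \]
is analytic. Granting these two facts, the map $(X,Y)\mapsto(\Phi(X),Y)$ is Borel and $\llll_\lambda=\{(X,Y):(\Phi(X),Y)\in\mathrm{Isom}\}$ is the preimage of an analytic set under a Borel map, hence analytic. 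The analyticity of $\mathrm{Isom}$ is routine: by the Kuratowski--Ryll-Nardzewski selection theorem one chooses, in a Borel way, sequences dense in $Z$ and in $Y$, and ``$Z$ is isometric to $Y$'' is then expressed by existentially quantifying (over $\nn^\nn$) a matching of rational combinations of these sequences that preserves all norms up to arbitrary precision; this is an analytic condition.

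The substance of the proof is the construction of $\Phi$, which amounts to carrying out the Bourgain--Pisier construction measurably in the parameter $X$. First I would fix a Borel coding of finite-dimensional normed spaces together with isometric embeddings---for instance, coding an $n$-dimensional space by the values of its norm on a fixed dense subset of $\rr^n$, and an embedding by a matrix---so that the standard operations (forming the subspace spanned by given vectors, taking a pushout/amalgam along an isometric embedding, and testing $d(\,\cdot\,,\ell_\infty^n)\le\lambda$) become Borel on the coding space. Using the Kuratowski--Ryll-Nardzewski theorem I would then select, from $X\in\sbs$, a Borel sequence of vectors dense in $X$, and hence an increasing sequence of finite-dimensional subspaces of $X$ with dense union, all coded Borel-measurably in $X$.

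The heart of the matter is the single step of the construction: given (a code for) a finite-dimensional space $E$, the Bourgain--Pisier lemma furnishes a finite-dimensional $F\supseteq E$ with $d(F,\ell_\infty^{\dim F})\le\lambda$, together with the amalgamation data realizing the $\llll_{\infty,\lambda+}$ and quotient estimates. I would verify that this step can be performed by a Borel (indeed, by a measurable-selection argument, single-valued) map on codes, so that iterating it---interleaved with the exhaustion of $X$---yields a Borel map assigning to $X$ an increasing chain $E_0(X)\subseteq E_1(X)\subseteq\cdots$ of finite-dimensional spaces depending Borel-measurably on $X$. The completion of $\bigcup_n E_n(X)$ is, by the Bourgain--Pisier construction, isometric to $\llll_\lambda[X]$; realizing this completion as a closed subspace of $C[0,1]$ through a Borel-coded isometric embedding produces the required element $\Phi(X)\in\sbs$ and exhibits $\Phi$ as a Borel map.

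The main obstacle is exactly this measurability of the single-step extension: one must reread the Bourgain--Pisier construction and check that each of its choices---the auxiliary $\ell_\infty^{k}$, the near-isometry witnessing $d(F,\ell_\infty^{\dim F})\le\lambda$, and the maps defining the amalgam---can be pinned down as Borel functions of the finite-dimensional input (for example by choosing a first witness in a Borel-parameterized search, or by a Kuratowski--Ryll-Nardzewski selection from the Borel set of admissible extensions), while preserving all the quantitative estimates. A secondary point to check is that passing from the Borel chain $(E_n(X))_n$ to its completion and then to a concrete subspace of $C[0,1]$ is itself Borel in the accumulated data. If one prefers to avoid any measurable selection, the same scheme yields analyticity directly by existentially quantifying over all admissible runs of the construction: one forms a Borel set of ``runs'' $(X,Y,w)$, where $w$ codes a valid chain together with an isometry of its completion onto $Y$, and projects it onto the $(X,Y)$-coordinates.
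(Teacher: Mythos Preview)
The paper does not prove this statement at all: Theorem~\ref{t23} is quoted from \cite{D} (Theorem~16 there) and is used only as a black box in the proof of Proposition~\ref{p21}. There is therefore no argument in the present paper to compare your proposal against.

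For what it is worth, your outline is the natural strategy and is, in broad strokes, how the result is obtained in \cite{D}: one shows that the Bourgain--Pisier construction can be carried out in a Borel-measurable way in the parameter $X\in\sbs$, and then combines this with the (standard) analyticity of the isometry relation on $\sbs\times\sbs$. You have correctly isolated the only substantive issue---namely, that each amalgamation step of the Bourgain--Pisier scheme, together with the passage to the inductive limit and its realization inside $C[0,1]$, must be made measurable in the finite-dimensional input---and your suggested tools (coding finite-dimensional spaces and embeddings, Kuratowski--Ryll-Nardzewski selection, or alternatively existential quantification over admissible runs) are exactly the right ones. What your sketch does not do, and what occupies most of the proof in \cite{D}, is actually carry out this verification; that requires going through the Bourgain--Pisier extension lemma line by line and checking that the auxiliary objects (the $\eta$-isometries, the amalgam $F$, the projections) depend Borel-measurably on the coded data. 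So your plan is sound, but the work lies precisely in the step you flag as ``the main obstacle''.
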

We will also need the following Ramsey-type lemma. Although it is
well-known, we sketch its proof for completeness.
\begin{lem}
\label{l24} Let $X$ be a Banach space and $Y$ be a closed subspace
of $X$. Then, for every subspace $Z$ of $X$ there exists a further
subspace $Z'$ of $Z$ such that $Z'$ is either isomorphic to a subspace
of $Y$, or isomorphic to a subspace of $X/Y$.

In particular, if $Y$ and $X/Y$ are both unconditionally saturated,
then so is $X$.
\end{lem}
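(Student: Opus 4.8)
The plan is to prove the dichotomy first and then derive the "in particular" clause as an immediate corollary. Let $Z$ be an infinite-dimensional subspace of $X$ and let $q\colon X\to X/Y$ denote the quotient map. The natural thing to do is to look at the restriction $q|_Z\colon Z\to X/Y$ and ask whether it is an isomorphism on some further subspace of $Z$. There are two cases, governed by the standard fact (a consequence of the fact that weakly null normalized sequences admit basic subsequences, or alternatively a small-perturbation argument) that an operator between Banach spaces either is bounded below on some infinite-dimensional subspace or is "strictly singular" in a way that lets one push $Z$ close to the kernel.

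First I would argue: if for some infinite-dimensional subspace $Z'$ of $Z$ the restriction $q|_{Z'}$ is bounded below, i.e. there is $c>0$ with $\|q(z)\|\geq c\|z\|$ for all $z\in Z'$, then $q|_{Z'}$ is an isomorphism onto its image $q(Z')\subseteq X/Y$, so $Z'$ is isomorphic to a subspace of $X/Y$ and we are done. Otherwise, $q|_Z$ is not bounded below on any infinite-dimensional subspace of $Z$; this is exactly the condition that $q|_Z$ is strictly singular. I would then use the following routine perturbation/gliding-hump argument: pick a normalized sequence $(z_n)$ in $Z$; since $q|_{\mathrm{span}(z_0,\dots,z_{n-1})}$ restricted to the unit sphere of any subspace can be made arbitrarily small, one constructs inductively a normalized basic sequence $(z_n')$ in $Z$ with $\|q(z_n')\|\leq \varepsilon_n$ for a rapidly decreasing sequence $(\varepsilon_n)$, and then chooses $y_n\in Y$ with $\|z_n'-y_n\|\leq 2\varepsilon_n$. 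If the $\varepsilon_n$ are small enough relative to the basis constant of $(z_n')$, the principle of small perturbations shows that $(y_n)$ is a basic sequence equivalent to $(z_n')$, so $Z'=\overline{\mathrm{span}}(z_n')$ is isomorphic to $\overline{\mathrm{span}}(y_n)$, a subspace of $Y$. Either way $Z$ has a further subspace embedding into $Y$ or into $X/Y$.

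For the "in particular" statement, suppose $Y$ and $X/Y$ are both unconditionally saturated and let $Z$ be any infinite-dimensional subspace of $X$. By the dichotomy there is a further subspace $Z'$ of $Z$ isomorphic either to a subspace of $Y$ or to a subspace of $X/Y$; in both cases $Z'$ is isomorphic to a subspace of an unconditionally saturated space, hence $Z'$ itself contains an unconditional basic sequence, and therefore so does $Z$. As $Z$ was arbitrary, $X$ is unconditionally saturated.

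The main obstacle is really just the careful bookkeeping in the strictly singular case: one has to interleave the choice of the block (or perturbed) subsequence $(z_n')$, the control on $\|q(z_n')\|$, and the smallness of $\varepsilon_n$ relative to the (a priori unknown, but bounded-by-construction) basis constant so that the small-perturbation lemma applies and yields an \emph{isomorphic}, not merely isometric-up-to-$\varepsilon$, copy. Everything else — the two-case split, passing from "bounded below" to "isomorphism onto image", and the final deduction — is formal. Since the lemma only asks for "a further subspace", we have complete freedom to pass to subsequences as often as needed, which is what makes the argument go through cleanly.
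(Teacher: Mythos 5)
Your proof is correct and follows essentially the same route as the paper: split according to whether the quotient map restricted to $Z$ is strictly singular, obtain an embedding into $X/Y$ in the non-singular case, and in the singular case build a normalized basic sequence almost annihilated by the quotient map, approximate it by vectors of $Y$ (using that $\|q(z)\|=\mathrm{dist}(z,Y)$), and conclude by the principle of small perturbations. The deduction of the ``in particular'' clause is also the same, so nothing further is needed.
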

\begin{proof}
Let $Q:X\to X/Y$ be the natural quotient map. Consider the following
(mutually exclusive) cases.
\medskip

\noindent \textsc{Case 1.} \textit{The operator $Q:Z\to X/Y$ is not
strictly singular.} This case, by definition, yields the existence
of a subspace $Z'$ of $Z$ such that $Q|_{Z'}$ is an isomorphic
embedding.
\medskip

\noindent \textsc{Case 2.} \textit{The operator $Q:Z\to X/Y$ is
strictly singular.} In this case our hypothesis implies that for every
subspace $Z'$ of $Z$ and every $\ee>0$ we may find a normalized vector
$z\in Z'$ such that $\|Q(z)\|\leq \ee$. Hence, for every subspace
$Z'$ of $Z$ and every $\ee>0$ there exist a normalized vector
$z\in Z'$ and a vector $y\in Y$ such that $\|z-y\|<\ee$. So,
we may construct a normalized Schauder basic sequence $(z_n)$
in $Z$ with basis constant $2$ and a sequence $(y_n)$ in $Y$
such that $\|z_n-y_n\|<1/8^n$ for every $n\in\nn$. It follows
that $(y_n)$ is equivalent to $(z_n)$ (see \cite{LT}). Setting
$Z'=\overline{\mathrm{span}}\{z_n:n\in\nn\}$, we see that $Z'$
is isomorphic to a subspace of $Y$. The proof is completed.
\end{proof}
We are ready to proceed to the proof of Proposition \ref{p21}.
\begin{proof}[Proof of Proposition \ref{p21}]
Let $\aaa$ be an analytic subset of $\us$. Let also $\llll_2$
be the subset of $\sbs\times\sbs$ obtained by applying Theorem
\ref{t23} for $\lambda=2$. We define $\aaa'\subseteq \sbs$ by the
rule
\[ Y\in \aaa' \Leftrightarrow \exists X \ \big[ X\in \aaa \text{ and }
(X,Y)\in\llll_2\big].\]
As both $\aaa$ and $\llll_2$ are analytic and the class of analytic
sets is closed under projections, we see that $\aaa'$ is analytic.
We claim that $\aaa'$ is the desired set. Indeed, notice that property
(ii) is an immediate consequence of Theorem \ref{t22}. To see (i),
let $Y\in \aaa'$ arbitrary. There exists $X\in \aaa$ such that
$Y$ is isometric to $\llll_2[X]$. By Theorem \ref{t22}, we know
that $\llll_2[X]/X$ is unconditionally saturated. Recalling that $X$
is also unconditionally saturated, by Lemma \ref{l24}, we see that
$Y\in \us$. Finally, our claim that $Y$ has a Schauder basis is an
immediate consequence of the fact that $Y$ is $\llll_\infty$ and of
a classical result due to W. B. Johnson, H. P. Rosenthal and M. Zippin
\cite{JRZ} asserting that every separable $\llll_\infty$-space has a
Schauder basis. The proof is completed.
\end{proof}

%-------------------------------------------------------------%
%         Schauder tree bases and $\ell_2$ Baire sums         %
%-------------------------------------------------------------%

\section{Schauder tree bases and $\ell_2$ Baire sums}

%------Definitions and statements of the main results---------%

\subsection{Definitions and statements of the main results}
Let us begin be recalling the following notion.
\begin{defn}[\cite{AD}, Definition 13]
\label{d31} Let $X$ be a Banach space, $\Lambda$ a countable set
and $T$ a pruned tree on $\Lambda$. Let also $(x_t)_{t\in T}$ be
a normalized sequence in $X$ indexed by the tree $T$. We say that
$\stblng$ is a Schauder tree basis if the following are satisfied.
\begin{enumerate}
\item[(a)] $X=\ospan\{x_t:t\in T\}$.
\item[(b)] For every $\sg\in [T]$ the sequence
$(x_{\sg|n})_{n\geq 1}$ is a (normalized) bi-monotone
Schauder basic sequence.
\end{enumerate}
\end{defn}
Let $\stblng$ be a Schauder tree basis. For every $\sg\in [T]$ we set
\begin{equation}
\label{e5} X_\sg=\ospan\{ x_{\sg|n}:n\geq 1\}.
\end{equation}
Notice that in Definition \ref{d31} we do not assume that the
subspace $X_\sg$ of $X$ is complemented. Notice also that if
$\sg, \tau\in [T]$ with $\sg\neq\tau$, then this does not necessarily
imply that $X_\sg\neq X_\tau$.
\begin{examp}
\label{ex32} Let $X=c_0$ and $(e_n)$ be the standard unit vector basis
of $c_0$. Let also $T=2^{<\nn}$ be the Cantor tree; i.e. $T$ is the set of
all non-empty finite sequences of $0$'s and $1$'s. For every $t\in T$,
denoting by $|t|$ the length of the finite sequence $t$, we define
$x_t=e_{|t|-1}$. It is easy to see that the family $(X,2,T,(x_t)_{t\in T})$
is a Schauder tree basis. Observe that for every $\sg\in [T]$ the
sequence $(x_{\sg|n})_{n\geq 1}$ is the standard basis of $c_0$.
Hence, the just defined Schauder tree basis has been obtained by
``spreading" along the branches of $2^{<\nn}$ the standard basis of
$c_0$.
\end{examp}
The notion of a Schauder tree basis serves as a technical vehicle
for the construction of a ``tree-like" Banach space in the spirit
of R. C. James \cite{Ja2}. This is the content of the following
definition.
\begin{defn}[\cite{AD}, \S 4.1]
\label{d33} Let $\stblng$ be a Schauder tree basis.
The $\ell_2$ Baire sum of $\mathfrak{X}$,
denoted by $\txtwo$, is defined to be the completion
of $c_{00}(T)$ equipped with the norm
\begin{equation}
\label{e6} \|z\|_{\txtwo}= \sup\Big\{ \Big( \sum_{j=0}^l
\big\| \sum_{t\in \seg_j} z(t) x_t\big\|^2_X \Big)^{1/2} \Big\}
\end{equation}
where the above supremum is taken over all finite
families $(\seg_j)_{j=0}^l$ of pairwise incomparable
segments of $T$.
\end{defn}
\begin{examp}
\label{ex34} Let $\mathfrak{X}$ be the Schauder tree basis
described in Example \ref{ex32} and consider the corresponding
$\ell_2$ Baire sum $\txtwo$. Notice that if $z\in \txtwo$,
then its norm is given by the formula
\[ \|z\|_{\txtwo}=\sup\Big\{ \Big(\sum_{j=0}^l
z(t_j)^2\Big)^{1/2}: (t_j)_{j=0}^l \text{ is an antichain of }
2^{<\nn}\Big\}.\]
This space has been defined by H. P. Rosenthal and it is known
in the literature as the \textit{$2$-stopping time} Banach
space (see \cite{BO}). It is usually denoted by $S_2$. A very
interesting fact concerning the structure of $S_2$
is that it contains almost isometric copies of $\ell_p$ for
every $2\leq p<\infty$. This is due to H. P. Rosenthal and
G. Schechtman (unpublished). On the other hand, the space
$S_2$ can contain no $\ell_p$ for $1\leq p<2$.
\end{examp}
Let $\stblng$ be a Schauder tree basis and consider the corresponding
$\ell_2$ Baire sum $\txtwo$ of $\mathfrak{X}$. Let $(e_t)_{t\in T}$
be the standard Hamel basis of $c_{00}(T)$. We fix a bijection
$h:T\to\nn$ such that for every pair $t,s\in T$ we have $h(t)<h(s)$
if $t\sqsubset s$. If $(e_{t_n})$ is the enumeration of $(e_t)_{t\in T}$
according to $h$, then it is easy to verify that the sequence
$(e_{t_n})$ defines a normalized bi-monotone Schauder basis of $\txtwo$.

For every $\sg\in [T]$ consider the subspace $\xxx_\sg$ of $\txtwo$
defined by
\begin{equation}
\label{e7} \xxx_\sg=\ospan\{ e_{\sg|n}:n\geq 1\}.
\end{equation}
It is easily seen that the space $\xxx_\sg$ is isometric to $X_\sg$
and, moreover, it is $1$-complemented in $\txtwo$ via the natural
projection $P_\sg:\txtwo\to \xxx_\sg$. More generally, for every
segment $\seg$ of $T$ we set $\xxx_\seg=\ospan\{e_t:t\in\seg\}$.
Again we see that $\xxx_\seg$ is isometric to the space
$\ospan\{x_t: t\in\seg\}$ and it is 1-complemented in $\txtwo$
via the natural projection $P_\seg:\txtwo\to \xxx_\seg$.

If $x$ is a vector in $\txtwo$, then by $\supp(x)$ we shall
denote its \textit{support}; i.e. the set $\{t\in T: x(t)\neq 0\}$.
The \textit{range} of $x$, denoted by $\range(x)$, is defined
to be the minimal interval $I$ of $\nn$ satisfying
$\supp(x)\subseteq \{t_n:n\in I\}$. We isolate, for future
use, the following consequence of the enumeration $h$ of $T$.
\begin{fact}
\label{f35} Let $\seg$ be a segment of $T$ and $I$ be an interval
of $\nn$. Consider the set $\seg'=\seg\cap \{t_n: n\in I\}$. Then
$\seg'$ is also a segment of $T$.
\end{fact}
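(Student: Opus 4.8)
The plan is to deduce the statement directly from the monotonicity property of the enumeration $h$, namely that $h(t)<h(s)$ whenever $t\sqsubset s$. This says precisely that $h$, restricted to any chain of $T$, is increasing for the order $\sqsubset$. Since $\seg'$ is a subset of the chain $\seg$, it is automatically a chain; hence the only thing left to verify is the convexity condition \eqref{e3} in the definition of a segment.

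To that end, I would fix nodes $s,t\in\seg'$ and $w\in\Lambda^{<\nn}$ with $s\sqsubseteq w\sqsubseteq t$, and show that $w\in\seg'$. First, since $s,t\in\seg$ and $\seg$ is a segment, applying condition \eqref{e3} to $\seg$ itself gives $w\in\seg$. It then remains to check that $w\in\{t_n:n\in I\}$, i.e. that $h(w)\in I$. From $s\sqsubseteq w\sqsubseteq t$ and the defining property of $h$ (which is strict on proper end-extensions, hence yields the weak inequalities $h(s)\le h(w)\le h(t)$ in all cases, including the degenerate ones $s=w$ or $w=t$) we obtain $h(s)\le h(w)\le h(t)$. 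Since $s,t\in\seg'$ we have $h(s),h(t)\in I$, and as $I$ is an interval of $\nn$ this forces $h(w)\in I$. Therefore $w\in\{t_n:n\in I\}\cap\seg=\seg'$, which is exactly what we needed.

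There is essentially no obstacle here: the statement is a bookkeeping consequence of having chosen the enumeration $h$ so as to respect end-extension, and the argument uses nothing about whether $\seg$ or $I$ is finite or infinite. The only minor point to keep straight is the handling of the degenerate cases $s=w$ and $w=t$, which are harmless precisely because one only needs the weak inequalities $h(s)\le h(w)\le h(t)$ that the interval $I$ requires; and, if one wishes to be fully pedantic, that the empty set is vacuously a segment, covering the possibility $\seg'=\varnothing$.
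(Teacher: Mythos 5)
Your argument is correct: the paper states Fact \ref{f35} without proof, as an immediate consequence of the choice of the enumeration $h$, and your verification (the convexity of $\seg$ plus the monotonicity $h(t)<h(s)$ for $t\sqsubset s$, together with $I$ being an interval) is exactly the intended reasoning. Nothing is missing.
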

Let now $Y$ be a subspace of $\txtwo$. Assume that there exist
a subspace $Y'$ of $Y$ and a $\sg\in [T]$ such that the
operator $P_\sg:Y'\to\xxx_\sg$ is an isomorphic embedding. In such
a case, the subspace $Y$ contains information about the Schauder
tree basis $\stblng$. On the other hand, there are subspaces of
$\txtwo$ which are ``orthogonal" to every $\xxx_\sg$. These subspaces
are naturally distinguished into two categories, as follows.
\begin{defn}[\cite{AD}, Definition 14]
\label{d36} Let $\stblng$ be a Schauder tree basis
and let $Y$ be a subspace of $\txtwo$.
\begin{enumerate}
\item[(a)] We say that $Y$ is $X$-singular if for every $\sg\in [T]$
the operator $P_\sg:Y\to\xxx_\sg$ is strictly singular.
\item[(b)] We say that $Y$ is $X$-compact if for every $\sg\in [T]$
the operator $P_\sg:Y\to\xxx_\sg$ is compact.
\end{enumerate}
\end{defn}
In this section, we are focussed on the structure of the class
of $X$-singular subspaces of an arbitrary $\ell_2$ Baire sum.
Our main results are summarized below.
\begin{thm}
\label{t37} Let $\stblng$ be a Schauder tree basis and $Y$
be an $X$-singular subspace of $\txtwo$. Then $Y$ is unconditionally
saturated.
\end{thm}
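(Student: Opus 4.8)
The plan is to show that every infinite-dimensional subspace $Z$ of an $X$-singular subspace $Y$ contains an unconditional basic sequence, and the natural candidate is a normalized block sequence of the bi-monotone Schauder basis $(e_{t_n})$ of $\txtwo$ which is ``spread out" along incomparable segments. First I would pass to a normalized block sequence $(z_k)$ of $(e_{t_n})$ inside $Z$; this is possible since $Z$ is infinite-dimensional and $(e_{t_n})$ is a Schauder basis (a standard gliding-hump argument gives a block sequence arbitrarily close to a sequence in $Z$, hence equivalent to one in $Z$). The key point is to exploit $X$-singularity to arrange that each $z_k$ is ``nearly supported on many incomparable segments." More precisely, using that each $P_\sg$ is strictly singular on $Y$ (and hence, by a compactness/diagonalization argument over the countably many $\sg$ that are relevant — or rather over finite unions of segments issuing from a fixed level), I would refine $(z_k)$ so that for each $k$, the vector $z_k$ can be written, up to a small perturbation $\sum_k \ee_k < \infty$, as a sum $z_k = \sum_{j} w_{k,j}$ where the $w_{k,j}$ are supported on pairwise incomparable segments and no single segment carries a large fraction of the norm of $z_k$.

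The heart of the matter is the following mechanism: in the $\ell_2$ Baire sum norm (\ref{e6}), vectors that live on pairwise incomparable segments behave like an $\ell_2$-sum, which is $1$-unconditional. So the second step would be to show that after the refinement, for \emph{distinct} $k \neq k'$ the ``dominant" segments of $z_k$ and $z_{k'}$ are incomparable, or at least that comparability can only occur through a controlled, summably small overlap. Here I would use Fact \ref{f35}: since the $z_k$ have successive ranges (they are a block sequence), any segment meeting the range of $z_k$ intersects the range of $z_{k'}$ ($k' > k$) in an initial piece whose contribution is small; combined with the perturbation, one gets that $(z_k)$ is equivalent to a sequence $(\tilde z_k)$ each of which is an $\ell_2$-type sum over incomparable segments, with the whole family $\bigcup_k (\text{segments of } \tilde z_k)$ being ``almost" an incomparable family. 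A direct estimate using (\ref{e6}) then shows $\|\sum_{k\in F} a_k \tilde z_k\| \leq C \|\sum_k a_k \tilde z_k\|$ for any $F$, i.e. unconditionality with a uniform constant.

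The main obstacle I anticipate is controlling the interaction of segments \emph{between} different block vectors $z_k$: a segment contributing to the norm of $\sum a_k z_k$ may pass through the supports of several consecutive $z_k$'s, so the norm is not literally an $\ell_2$-sum of the individual norms, and one must argue that the ``long" segments are penalized. This is exactly where strict singularity of the $P_\sg$'s must be used quantitatively — it forces that within a single $z_k$ no branch direction dominates, so a long segment through many $z_k$'s captures only a vanishing proportion of each, and its total contribution is dominated by a genuinely incomparable decomposition. Making this precise will require a careful iterated choice of the block sequence (choosing $z_{k}$ only after fixing finitely many segment directions coming from $z_0,\dots,z_{k-1}$ and using strict singularity to kill those directions in $z_k$ up to error $2^{-k}$), after which the unconditionality estimate becomes a routine application of the triangle inequality and the defining supremum (\ref{e6}), using that a single incomparable family can be split according to which $z_k$'s it meets. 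I would also remark that this simultaneously yields the upper $\ell_2$ estimate foreshadowed in part (3) of the introduction, so the two theorems share this construction.
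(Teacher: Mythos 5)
Your overall architecture is the same as the paper's: extract a normalized block sequence each of whose terms has small projection on every segment, then run an $\ell_2$-type estimate over an arbitrary family of pairwise incomparable segments (partitioned according to the first block vector each segment meets) to get unconditionality, with the upper $\ell_2$ estimate as a by-product; this is exactly the role of Corollary \ref{c311} and Lemma \ref{l315}. The genuine gap is in how you propose to obtain such vectors. You assert that strict singularity of the branch projections $P_\sg$, via ``a compactness/diagonalization argument over the countably many $\sg$ that are relevant,'' or by fixing finitely many segment directions coming from $z_0,\dots,z_{k-1}$ and killing them up to $2^{-k}$, forces each $z_k$ to have no dominant segment. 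This step fails as described: $[T]$ is in general uncountable, and, more to the point, the segments entering the unconditionality estimate are the members of an arbitrary incomparable family witnessing the norm of $\sum_n a_n w_n$; they are not determined by the previously chosen vectors, so killing finitely many pre-specified directions does not prevent the next vector from concentrating essentially all of its norm on a single unforeseen segment. The statement you actually need --- for every $\ee>0$ there is a normalized finitely supported $y\in Y$ with $\|P_\seg(y)\|\leq\ee$ for \emph{every} segment $\seg$ of $T$ --- is Corollary \ref{c311}, and it is the real content of the proof. The paper derives it from Proposition \ref{p310}, i.e.\ from the equivalence of $X$-singularity of $Y$ with strict singularity of the inclusion $\mathrm{I}:Y\to\txco$, and the proof of that equivalence is not a diagonalization but a Ramsey dichotomy (the minimal nodes of the selected segments form either an antichain or a chain) combined with two nontrivial inputs from \cite{AD} (Lemma \ref{l313} and Proposition \ref{p314}). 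Without this ingredient your construction cannot rule out that in some further subspace every normalized vector has projection of norm at least $1/2$ on some segment.

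Two secondary points. First, even granted vectors with uniformly small segment projections, an error of size $2^{-k}$ per step is not enough: in the final estimate the incomparable segments first meeting $\supp(w_{n_i})$ can be as many as $|\supp(w_{n_i})|$, so the smallness of the later vectors must be weighted by the support sizes of the earlier ones, which is precisely why condition (\ref{e14}) carries the factor $\big(\sum_{i<n}|\supp(w_i)|^{1/2}\big)^{-1}$. Second, your intermediate goal of making the ``dominant'' segments of distinct $z_k$'s incomparable is not needed and is not how the argument runs; the partition-by-first-block estimate handles arbitrary incomparable families directly.
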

\begin{thm}
\label{t38} Let $\stblng$ be a Schauder tree basis and $Y$
be an $X$-singular subspace of $\txtwo$. Then for every
normalized Schauder basic sequence $(x_n)$ in $Y$ there exists
a normalized block sequence $(y_n)$ of $(x_n)$ satisfying
an upper $\ell_2$ estimate. That is, there exists a constant
$C\geq 1$ such that for every $k\in\nn$ and every $a_0,...,a_k\in \rr$
we have
\[ \big\| \sum_{n=0}^k a_n y_n \big\|_{\txtwo} \leq
C \Big( \sum_{n=0}^k |a_n|^2\Big)^{1/2}. \]
In particular, every $X$-singular subspace $Y$ of $\txtwo$
can contain no $\ell_p$ for $1\leq p<2$.
\end{thm}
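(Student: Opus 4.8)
I would organise the argument in three steps: a soft reduction, a clean norm estimate, and a construction in which $X$-singularity is used.

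\emph{Step 1 (reduction).} A standard small-perturbation argument reduces matters to the case where $(x_n)$ is a normalised block sequence of the canonical basis $(e_{t_n})$ of $\txtwo$: pass to a subsequence along which all coordinates converge, replace $(x_n)$ by the normalised difference sequence with general term $\|x_{2k}-x_{2k+1}\|^{-1}(x_{2k}-x_{2k+1})$ (a normalised block sequence of $(x_n)$ which is seminormalised, since $(x_n)$ is basic, and coordinatewise null), and apply the Bessaga--Pe{\l}czy\'{n}ski selection principle together with a small perturbation. Since a block sequence of a block sequence of $(x_n)$ is one of $(x_n)$, and an upper $\ell_2$ estimate is stable under small perturbations, it suffices to find $(y_n)$ as a block sequence of this relabelled $(x_n)$.

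\emph{Step 2 (the estimate).} Suppose $(y_n)$ is a normalised block sequence of $(x_n)$ -- hence of $(e_{t_n})$ -- with bounded branch-multiplicity: for some $D\in\nn$, every branch $\sg\in[T]$ meets the supports of at most $D$ of the $y_n$'s, i.e.\ $|\{n:\{\sg|m:m\geq 1\}\cap\supp(y_n)\neq\varnothing\}|\leq D$. Then $\|\sum_n a_ny_n\|_{\txtwo}\leq\sqrt{D}\,(\sum_n|a_n|^2)^{1/2}$ for all scalars. Indeed, fix scalars and a finite family $(\seg_j)_{j=0}^{l}$ of pairwise incomparable segments of $T$. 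For each $j$, all the vectors $v_{j,n}:=P_{\seg_j}(y_n)$ lie in $\xxx_{\seg_j}$, and since the $y_n$ have successive ranges and $\seg_j$ is a chain (on which $\sqsubset$ and the order induced by the enumeration $h$ agree), after the canonical isometric identification of $\xxx_{\seg_j}$ with a subspace of $X$ they are successive blocks of a basic sequence; hence $\|P_{\seg_j}(\sum_n a_ny_n)\|=\|\sum_n a_nv_{j,n}\|\leq\sum_{n\in N_j}|a_n|\,\|v_{j,n}\|$, where $N_j=\{n:v_{j,n}\neq 0\}$. Extending the chain $\seg_j$ to a branch $\sg\in[T]$ (possible since $T$ is pruned) we get $N_j\subseteq\{n:\{\sg|m:m\geq 1\}\cap\supp(y_n)\neq\varnothing\}$, so $|N_j|\leq D$, and Cauchy--Schwarz gives $\|P_{\seg_j}(\sum_n a_ny_n)\|^2\leq D\sum_n|a_n|^2\|v_{j,n}\|^2$. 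Summing over $j$ and using that, for each fixed $n$, the family $(\seg_j)_j$ being pairwise incomparable forces $\sum_j\|v_{j,n}\|^2=\sum_j\|P_{\seg_j}y_n\|^2\leq\|y_n\|^2=1$, we obtain $\sum_j\|P_{\seg_j}(\sum_n a_ny_n)\|^2\leq D\sum_n|a_n|^2$, and the supremum over all such families is $\|\sum_n a_ny_n\|_{\txtwo}^2$. (That it is the bound on the \emph{number} of relevant indices, not merely smallness of the individual $\|P_{\seg_j}y_n\|$, that matters here is shown by the matrix with all entries $m^{-1/2}$ in an $m\times m$ block.)

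\emph{Step 3 (the construction).} It remains to produce inside the $X$-singular subspace $Y$ a normalised block sequence $(y_n)$ of $(x_n)$ of bounded branch-multiplicity; this is the heart of the matter and the only place strict singularity of the projections $P_\sg:Y\to\xxx_\sg$ enters. I would build $(y_n)$ recursively: having chosen $y_0,\dots,y_{n-1}$ with finite support union $G$, I would take $y_n$ from a tail block subspace of $(x_m)$ whose support avoids, or essentially avoids, the cone $\{s\in T:t\sqsubseteq s\text{ for some }t\in G\}$ above $G$, so that no branch already committed to $G$ can also pick up $y_n$. The point is that if this were impossible on every tail at some stage, then the norm-one projection onto $\ospan\{e_s:t\sqsubseteq s\text{ for some }t\in G\}$ would be bounded below on a subspace of a tail; iterating this failure and using the Ramsey-type Lemma~\ref{l24} to pass each time into the $\ell_2$ Baire sum of a single subtree would yield an infinite $\sqsubset$-descending chain -- a branch $\sg\in[T]$ -- along which $P_\sg$ is not strictly singular on a subspace of $Y$, contradicting $X$-singularity. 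Making this rigorous is the step I expect to be hardest: one must keep an infinite-dimensional reservoir alive throughout the recursion (a fusion argument), control the perturbations in the ``essentially avoids'' clause so errors do not accumulate, and probably first stabilise the collection of ``relevant'' branches so that only finitely many cone-patterns need be avoided at each step. The result is a block sequence of some finite branch-multiplicity $D$, to which Step 2 applies. (This is essentially the mechanism underlying Theorem~\ref{t37} as well: branch-multiplicity one would make $(y_n)$ isometrically equivalent to the unit vector basis of $\ell_2$, and in general the extracted sequence can be taken unconditional.)

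\emph{The $\ell_p$ assertion.} Finally, if $Y$ contained $\ell_p$ for some $1\leq p<2$ it would contain a normalised basic sequence $C$-equivalent to the $\ell_p$-basis; applying the theorem to it yields a normalised block sequence $(y_n)$ of it with an upper $\ell_2$ estimate, while any normalised block sequence of an $\ell_p$-sequence is equivalent to the $\ell_p$-basis. Taking $a_0=\dots=a_k=1$ gives $c\,(k+1)^{1/p}\leq\|\sum_{n=0}^{k}y_n\|\leq C'\,(k+1)^{1/2}$ for all $k$, which is impossible since $1/p>1/2$.
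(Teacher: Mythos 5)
Your Step 2 is a correct and clean estimate, and Step 1 and the final $\ell_p$ argument are fine, but the proof collapses at Step 3, which you leave as a sketch: the intermediate statement you aim for there --- that every block $X$-singular subspace contains a normalized block sequence of bounded branch-multiplicity --- is false, not merely hard. Take the Schauder tree basis of Example \ref{ex32} ($c_0$ spread along $2^{<\nn}$, so $\txtwo$ is the $2$-stopping time space) and set $w_n=2^{-(n+1)/2}\sum_{|t|=n+1}e_t$, the normalized ``flat'' vector supported on the whole $(n+1)$-st level. By Kraft's inequality for antichains of $2^{<\nn}$, $(w_n)$ is isometrically equivalent to the $c_0$-basis, and for every branch $\sg$ one has $\|P_\sg(\sum_n a_nw_n)\|=\sup_n|a_n|2^{-(n+1)/2}$, so $W=\ospan\{w_n\}$ is $X$-singular (indeed $X$-compact); this is exactly the kind of subspace from [AD, Theorem 25] mentioned right after the statement of Theorem \ref{t38}. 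Now take $(x_n)=(w_n)$: every nonzero block vector of $(w_n)$ has support containing an entire level of $2^{<\nn}$, hence every branch meets the support of every member of any normalized block sequence, so the branch-multiplicity is infinite for every such sequence; and no ``cone-avoiding'' recursion or small perturbation can help, since the part of a tail vector lying in the cone above a full level is the whole vector. Thus the mechanism you propose can never produce the hypothesis your Step 2 needs, even though the conclusion of the theorem holds here trivially (a $c_0$-sequence satisfies an upper $\ell_2$ estimate).

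What the paper uses instead is a weaker condition that is actually attainable: not that few $w_n$'s meet a given branch, but that each $w_n$ has uniformly small projection on every segment, with the smallness calibrated to $|\supp(w_0)|,\dots,|\supp(w_{n-1})|$ (inequality (\ref{e14})). Such vectors exist in any block $X$-singular subspace by Corollary \ref{c311}, and this is where strict singularity genuinely enters; that corollary rests on Proposition \ref{p310} (the inclusion $\mathrm{I}:Y\to\txco$ is strictly singular), proved by a Ramsey chain/antichain dichotomy --- note in passing that the ``Ramsey-type'' lemma you invoke, Lemma \ref{l24}, is the Kato-type dichotomy, not the Ramsey ingredient. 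Given (\ref{e14}), Lemma \ref{l315}(ii) (that is, [AD, Proposition 21]) yields the upper $\ell_2$ estimate directly: the quantitative smallness controls the contribution of the possibly many $w_n$'s that a single segment meets, with no support-disjointness along branches required. To repair your argument you would have to replace the branch-multiplicity hypothesis in Step 2 by such a smallness hypothesis and prove the corresponding estimate --- which is essentially the paper's Lemma \ref{l315}.
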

We notice that in Theorem \ref{t38} one cannot expect to
obtain a block sequence satisfying a lower $\ell_2$ estimate.
Indeed, as it has been shown in \cite[Theorem 25]{AD},
if $\stblng$ is a Schauder tree basis such that the tree
$T$ is not small (precisely, if the tree $T$ contains
a perfect\footnote[4]{A tree $T$ is perfect if every
node $t\in T$ has at least two incomparable successors.}
subtree), then one can find in $\txtwo$ a normalized block
sequence $(x_n)$ which is equivalent to the standard basis of
$c_0$ and which spans an $X$-singular subspace. Clearly, no block
subsequence of $(x_n)$ can have a lower $\ell_2$ estimate.

The rest of this section is organized as follows. In \S 3.2 we
provide a characterization of the class of $X$-singular
subspaces of $\txtwo$. Using this characterization we show,
for instance, that every $X$-singular subspace of $\txtwo$
contains an $X$-compact subspace. This can be seen as a
``tree" version of the classical theorem of T. Kato asserting
that for every strictly singular operator $T:X\to Y$ there
is an infinite-dimensional subspace $Z$ of $X$ such that
the operator $T:Z\to Y$ is compact. In \S 3.3 we give the
proofs of Theorem \ref{t37} and of Theorem \ref{t38}.

%----------A characterization of $X$-singular subspaces-------%

\subsection{A characterization of $X$-singular subspaces}
We start with the following definition.
\begin{defn}
\label{d39} Let $\stblng$ be a Schauder tree basis.
The $c_0$ Baire sum of $\mathfrak{X}$, denoted by
$\txco$, is defined to be the completion of $c_{00}(T)$
equipped with the norm
\begin{equation}
\label{e8} \|z\|_{\txco}= \sup\Big\{ \big\| \sum_{t\in \seg}
z(t) x_t\big\|_X : \seg \text{ is a segment of } T \Big\}.
\end{equation}
By $\mathrm{I}:\txtwo\to \txco$ we shall denote the natural inclusion
operator.
\end{defn}
Our characterization of $X$-singular subspaces of $\txtwo$ is
achieved by considering the functional analytic properties of
the inclusion operator $\mathrm{I}:\txtwo\to\txco$. Precisely,
we have the following.
\begin{prop}
\label{p310} Let $\stblng$ be a Schauder tree basis. Let $Y$
be a subspace of $\txtwo$. Then the following are equivalent.
\begin{enumerate}
\item[(i)] $Y$ is an $X$-singular subspace of $\txtwo$.
\item[(ii)] The operator $\mathrm{I}:Y\to\txco$ is strictly singular.
\end{enumerate}
\end{prop}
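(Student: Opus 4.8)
The plan is to prove the two implications separately, with the direction (i)$\Rightarrow$(ii) being the substantive one. For the easy direction (ii)$\Rightarrow$(i), I would observe that each projection $P_\sg:\txtwo\to\xxx_\sg$ factors, up to isometry, through the inclusion $\mathrm{I}:\txtwo\to\txco$. Indeed, on $\xxx_\sg$ the two norms $\|\cdot\|_{\txtwo}$ and $\|\cdot\|_{\txco}$ agree (the branch $\{\sg|n:n\geq 1\}$ is itself a single segment, and restricting a vector supported on it to segments only decreases norm), so $P_\sg = P_\sg \circ \mathrm{I}$ where now $P_\sg$ is viewed as a norm-one map on $\txco$; hence if $\mathrm{I}|_Y$ is strictly singular then so is $P_\sg|_Y$ for every $\sg$, i.e.\ $Y$ is $X$-singular.

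The hard direction is (i)$\Rightarrow$(ii). Here I would argue by contradiction: suppose $Y$ is $X$-singular but $\mathrm{I}|_Y$ is not strictly singular, so there is a subspace $Z$ of $Y$ on which $\mathrm{I}$ is an isomorphic embedding, meaning the $\txco$-norm and the $\txtwo$-norm are equivalent on $Z$, say $\|z\|_{\txtwo}\leq M\|z\|_{\txco}$ for all $z\in Z$. The goal is to extract from $Z$ a vector, or a sequence spanning a subspace, on which some $P_\sg$ is bounded below — contradicting $X$-singularity. The key point is that control of the $\txco$-norm means that a normalized vector $z\in Z$ cannot have its $\ell_2$-mass spread thinly across many pairwise incomparable segments; rather, a definite fraction of $\|z\|_{\txtwo}^2$ must be concentrated on relatively few segments, and by a pigeonhole/gliding-hump argument one of those segments carries a fixed proportion of the norm. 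I would then pass to a normalized block sequence $(z_k)$ in $Z$ (using that the standard basis $(e_{t_n})$ of $\txtwo$ is bi-monotone, so blocks are well behaved and ranges can be separated) together with segments $\seg_k$ with $\range$ essentially matching $\range(z_k)$ and $\|P_{\seg_k}z_k\|\geq\delta$ for a uniform $\delta>0$; Fact \ref{f35} guarantees that intersecting a segment with a range interval still yields a segment, which is what makes this truncation legitimate. After refining so the $\seg_k$ are pairwise comparable along a single branch (or pairwise incomparable — one splits into these two cases by a Ramsey argument on the tree of minimal nodes), in the comparable case the union lies on a single $\sg\in[T]$ and $P_\sg$ is bounded below on $\ospan\{z_k\}$, contradicting $X$-singularity; the incomparable case must be shown impossible because it would force the $\txtwo$ and $\txco$ norms to diverge (many incomparable segments each of size $\geq\delta$ make the $\ell_2$-sum blow up while the sup over single segments stays bounded), contradicting the equivalence of norms on $Z$.

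The main obstacle I anticipate is the bookkeeping in the extraction step: one must simultaneously arrange that the $z_k$ are a block sequence with well-separated ranges, that each $z_k$ nearly lives (in $\txco$-norm) on a single segment $\seg_k$, and that the $\seg_k$ can be truncated to pairwise incomparable (or made to lie on one branch) without destroying the lower estimate $\|P_{\seg_k}z_k\|\geq\delta$. The perturbation arguments — replacing $z_k$ by a finitely supported vector, replacing $\seg_k$ by $\seg_k\cap\{t_n:n\in\range(z_k)\}$, and absorbing the resulting errors into $\delta$ — are routine in spirit but need the bi-monotonicity of the basis and Fact \ref{f35} to go through cleanly, so I would set up the block-sequence machinery carefully before running the dichotomy on segments.
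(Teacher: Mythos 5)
Your factorization argument for (ii)$\Rightarrow$(i) is correct (and is exactly what the paper dismisses as clear), and the skeleton of your (i)$\Rightarrow$(ii) argument -- contradiction, passing to a block subspace $Z$ on which the two norms are $C$-equivalent, choosing for each block vector a heavy segment inside its range via Fact~\ref{f35}, and running a Ramsey dichotomy on the minimal nodes -- is the same as the paper's. The problem is that both halves of your dichotomy, as you finish them, are gapped, and the missing ingredient is the same in both: before selecting the heavy segments, the paper first exploits the $X$-singularity of $Z$ (Proposition~\ref{p314}, i.e.\ [AD, Prop.~33]) to produce a normalized block sequence whose projections onto every branch are eventually smaller than a prescribed $\ee$, and then uses Lemma~\ref{l313} together with bi-monotonicity to arrange that for \emph{every segment} $\seg$ at most one member of the sequence satisfies $\|P_\seg(\cdot)\|\geq\ee$. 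Your sketch has no analogue of this stabilization step, and without it neither case closes.

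Concretely: (a) in your ``comparable'' case, comparability of the minimal nodes does not put the segments $\seg_k$ on a single branch (only their minima lie on the branch $\tau$ they generate); and even if it did, the pointwise bounds $\|P_\sg(z_k)\|\geq\delta$ do not make $P_\sg$ bounded below on $\ospan\{z_k:k\in\nn\}$ or on any further subspace -- a strictly singular operator can be bounded below on a normalized basic sequence (the inclusion of $\ell_1$ into $\ell_2$ on the unit vectors), so no contradiction with $X$-singularity follows from this alone. The paper instead uses the pre-arranged smallness along $\tau$ to show that the heavy part of each $\seg_k$ may be taken to be $\seg_k\setminus\tau$, and these sub-segments are pairwise incomparable, reducing the chain case to the antichain case; the contradiction in both cases is with the norm equivalence, not with $X$-singularity directly. (b) In your antichain case, the assertion that ``the sup over single segments stays bounded'' for $z=z_{k_1}+\dots+z_{k_N}$ is unjustified: a single segment can cut through the ranges of all the $z_{k_i}$ and pick up non-trivial mass from each (along a branch the norms can add, e.g.\ if the branch basis is $\ell_1$-like), so $\|z\|_{\txco}$ need not be $o(\sqrt{N})$ and no contradiction with $C\|z\|_{\txtwo}\leq\|z\|_{\txco}$ is reached. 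It is precisely the ``at most one large projection per segment'' property that yields the paper's bound $\|P_\seg(w)\|\leq(1+k_0\ee)/(C\sqrt{k_0+1})<C/2$ for every segment $\seg$. So the proof can be repaired, but only by inserting the stabilization step (Proposition~\ref{p314} plus Lemma~\ref{l313}) before the Ramsey dichotomy; as written, the argument does not go through.
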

Let us isolate two consequences of Proposition \ref{p310}. The one
that follows is simply a restatement of Proposition \ref{p310}.
\begin{cor}
\label{c311} Let $\stblng$ be a Schauder tree basis and $Y$ be a
block subspace of $\txtwo$. Assume that $Y$ is $X$-singular. Then
for every $\ee>0$ we may find a finitely supported vector $y\in Y$
with $\|y\|=1$ and such that $\|P_\seg(y)\|\leq \ee$ for every
segment $\seg$ of $T$.
\end{cor}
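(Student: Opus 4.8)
The plan is to obtain Corollary \ref{c311} as an immediate consequence of Proposition \ref{p310}, combined with the explicit formula for the norm of $\txco$ and the description of the projections $P_\seg$. First I would apply Proposition \ref{p310}: since $Y$ is an $X$-singular subspace of $\txtwo$, the inclusion operator $\mathrm{I}:Y\to\txco$ is strictly singular. In particular, its restriction to the infinite-dimensional space $Y$ cannot be bounded below, so
\[ \inf\big\{ \|\mathrm{I}(z)\|_{\txco} : z\in Y,\ \|z\|_{\txtwo}=1\big\}=0. \]

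Next I would use the hypothesis that $Y$ is a \emph{block} subspace, say $Y=\ospan\{y_n:n\in\nn\}$ for some normalized block sequence $(y_n)$ of the canonical basis $(e_{t_n})$ of $\txtwo$. Every vector in $\sspan\{y_n:n\in\nn\}$ is finitely supported, and this linear span is dense in $Y$; since the map $z\mapsto\|\mathrm{I}(z)\|_{\txco}$ is norm-continuous on $\txtwo$, the infimum above is unchanged if we restrict it to finitely supported norm-one vectors of $Y$. Hence, given $\ee>0$, I can choose a finitely supported $y\in Y$ with $\|y\|_{\txtwo}=1$ and $\|y\|_{\txco}<\ee$.

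It then remains to check that this $y$ is as required. Fix a segment $\seg$ of $T$. Since $\xxx_\seg=\ospan\{e_t:t\in\seg\}$ is isometric to $\ospan\{x_t:t\in\seg\}\subseteq X$ via the map sending $e_t$ to $x_t$, and $P_\seg$ is the natural projection of $\txtwo$ onto $\xxx_\seg$, we have $\|P_\seg(y)\|_{\txtwo}=\big\|\sum_{t\in\seg}y(t)x_t\big\|_X$. By the defining formula (\ref{e8}) of the norm of $\txco$, which is a supremum over single segments of exactly such quantities, this is at most $\|y\|_{\txco}<\ee$. Thus $\|P_\seg(y)\|\le\ee$ for every segment $\seg$ of $T$, as claimed.

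I do not anticipate a genuine obstacle here: essentially all the content lies in Proposition \ref{p310}, and the only point where the precise hypotheses of the corollary (as opposed to those of an arbitrary $X$-singular subspace) are used is the passage from an abstract norm-one vector of $Y$ with small $\txco$-norm to a \emph{finitely supported} one — which is exactly where the block-subspace assumption enters, via the density of $\sspan\{y_n\}$ in $Y$.
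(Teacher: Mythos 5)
Your argument is correct and follows exactly the route the paper intends: the paper presents Corollary \ref{c311} as a mere restatement of Proposition \ref{p310}, and your write-up simply spells out the three ingredients (strict singularity of $\mathrm{I}$ on $Y$ gives norm-one vectors of small $\txco$-norm, density of the finitely supported vectors in a block subspace plus $\|\cdot\|_{\txco}\le\|\cdot\|_{\txtwo}$ allows one to take $y$ finitely supported, and $\|P_\seg(y)\|\le\|y\|_{\txco}$ for each segment $\seg$ by (\ref{e8}) and the isometry $\xxx_\seg\cong\ospan\{x_t:t\in\seg\}$). No gaps; this matches the paper's reasoning.
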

\begin{cor}
\label{c312} Let $\stblng$ be a Schauder tree basis and $Y$ be an
infinite-dimensional subspace of $\txtwo$. Assume that $Y$ is $X$-singular.
Then there exists an infinite-dimensional subspace $Y'$ of $Y$ which
is $X$-compact.
\end{cor}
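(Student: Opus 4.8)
The plan is to derive this from Proposition \ref{p310} together with the classical theorem of T. Kato recalled above. By Proposition \ref{p310}, the assumption that $Y$ is $X$-singular is precisely the assertion that the inclusion operator $\mathrm{I}\colon Y\to\txco$ is strictly singular; so Kato's theorem furnishes an infinite-dimensional subspace $Y'$ of $Y$ on which $\mathrm{I}$ becomes \emph{compact}, i.e.\ $\mathrm{I}\colon Y'\to\txco$ is a compact operator. The whole matter is thereby reduced to one implication: if $\mathrm{I}\colon Y'\to\txco$ is compact, then $Y'$ is $X$-compact. I would prove this by factoring each projection $P_\sg$ through $\mathrm{I}$.

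Fix $\sg\in[T]$ and let $b_\sg=\{\sg|n:n\geq 1\}$ be the corresponding branch of $T$. Since $\sg|n\in T$ for all $n$, the branch $b_\sg$, and each finite initial piece of it, is a segment of $T$. Define a linear map $Q_\sg$ on $c_{00}(T)$ by $Q_\sg(w)=\sum_{n\geq 1} w(\sg|n)\, e_{\sg|n}$. Using that $\xxx_\sg$ is isometric to $X_\sg=\ospan\{x_{\sg|n}:n\geq 1\}$ via $e_{\sg|n}\mapsto x_{\sg|n}$, for $w\in c_{00}(T)$ we get
\[ \|Q_\sg(w)\|_{\xxx_\sg}=\Big\|\sum_{n\geq 1} w(\sg|n)\, x_{\sg|n}\Big\|_X\leq \sup_{\seg}\Big\|\sum_{t\in\seg} w(t)\, x_t\Big\|_X=\|w\|_{\txco}, \]
the supremum being over all segments $\seg$ of $T$ and the inequality coming from the choice $\seg=$ the finite initial piece of $b_\sg$ covering $\supp(w)$, together with the definition \eqref{e8} of the $\txco$-norm. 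Hence $Q_\sg$ extends to a bounded operator $Q_\sg\colon\txco\to\xxx_\sg$ of norm at most $1$. Moreover $Q_\sg$ and $P_\sg$ agree on the dense subspace $c_{00}(T)$ of $\txtwo$ (both send $e_t$ to $e_t$ if $t\in b_\sg$, and to $0$ otherwise), so $P_\sg=Q_\sg\circ\mathrm{I}$ on $\txtwo$.

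Restricting to $Y'$ now gives $P_\sg|_{Y'}=Q_\sg\circ(\mathrm{I}|_{Y'})$, which is the composition of the compact operator $\mathrm{I}|_{Y'}\colon Y'\to\txco$ with the bounded operator $Q_\sg$, hence compact. Since $\sg\in[T]$ was arbitrary, $Y'$ is $X$-compact, as desired. I do not expect a serious obstacle beyond the above bookkeeping; the only delicate point is verifying that $Q_\sg$ is well defined and bounded on all of $\txco$, which is exactly where one uses that initial pieces of branches are segments of $T$ and that $\xxx_\sg\cong X_\sg$ isometrically. (A longer route avoiding Kato: first pass to a block subspace of $Y$, then use iterated applications of Corollary \ref{c311} to build a normalized block sequence $(y_n)$ with $\|P_\seg(y_n)\|\leq\ee_n$ for every segment $\seg$ and with $\sum_n\ee_n<\infty$; the closed linear span of $(y_n)$ is then readily checked to be $X$-compact.)
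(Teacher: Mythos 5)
Your proposal is correct and follows essentially the same route as the paper: invoke Proposition \ref{p310} to see that $\mathrm{I}:Y\to\txco$ is strictly singular, apply Kato's theorem (cited in the paper as \cite[Proposition 2.c.4]{LT}) to get $Y'$ with $\mathrm{I}:Y'\to\txco$ compact, and conclude that $Y'$ is $X$-compact. The only difference is that you spell out, via the factorization $P_\sg=Q_\sg\circ\mathrm{I}$ with $\|Q_\sg\|\leq 1$, the final step which the paper dismisses as ``easy to see,'' and your verification of it is sound.
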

\begin{proof}
By Proposition \ref{p310}, the operator $\mathrm{I}:Y\to\txco$
is strictly singular. By \cite[Proposition 2.c.4]{LT}, there
exists an infinite-dimensional subspace $Y'$ of $Y$ such that
the operator $\mathrm{I}:Y'\to \txco$ is compact. It is easy to
see that $Y'$ must be an $X$-compact subspace of $\txtwo$ in
the sense of Definition \ref{d36}(b). The proof is completed.
\end{proof}
For the proof of Proposition \ref{p310} we need a couple
of results from \cite{AD}. The first one is the following
(see \cite[Lemma 17]{AD}).
\begin{lem}
\label{l313} Let $(x_n)$ be a bounded block sequence in $\txtwo$
and $\ee>0$ be such that $\limsup\|P_\sg(x_n)\|<\ee$
for every $\sg\in [T]$. Then there exists $L\in [\nn]$ such that
for every $\sg\in [T]$ we have $|\{n\in L: \|P_\sg(x_n)\|\geq\ee\}|\leq 1$.
\end{lem}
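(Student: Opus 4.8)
The plan is to reduce the assertion to a combinatorial statement about finite antichains of $T$ and then to prove that statement by a Ramsey dichotomy.

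\emph{Step 1 (reformulation).} Recall from the discussion following \eqref{e7} that each $\xxx_\sg$ is isometric to $X_\sg$, so for a finitely supported $z$ one has $\|P_\sg(z)\|_{\txtwo}=\big\|\sum_{k\geq 1}z(\sg|k)x_{\sg|k}\big\|_X$. Since $(x_n)$ is a finitely supported block sequence and each $(x_{\sg|k})_{k\geq 1}$ is bi-monotone, for every $n$ the quantity $\varphi_n(t):=\big\|\sum_{s\in\supp(x_n),\ s\sqsubseteq t}x_n(s)x_s\big\|_X$ is non-decreasing as $t$ moves up a chain of $T$, and $\|P_\sg(x_n)\|=\varphi_n(m_\sg(n))$, where $m_\sg(n)$ is the $\sqsubseteq$-largest node of $\supp(x_n)$ lying on $\sg$ (and $\varphi_n=0$ on the empty sum). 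Hence, letting $\hat E_n$ be the antichain of $\sqsubseteq$-minimal nodes $t\in\supp(x_n)$ with $\varphi_n(t)\geq\ee$ (a finite, possibly empty set), monotonicity of $\varphi_n$ yields
\[ \|P_\sg(x_n)\|\geq\ee\iff \sg\ \text{extends some node of}\ \hat E_n. \]
If $\hat E_n=\varnothing$ for infinitely many $n$ these $n$ already form the desired $L$, so after relabelling assume every $\hat E_n$ is a non-empty finite antichain and set $U_n=\{\sg\in[T]:\sg\ \text{extends a node of}\ \hat E_n\}$. The hypothesis says exactly that every $\sg\in[T]$ lies in only finitely many $U_n$, and the conclusion we want is an infinite $L$ such that the $U_n$ with $n\in L$ are pairwise disjoint; since any two comparable nodes lie on a common branch (and conversely), this is the same as asking that for $n\neq n'$ in $L$ no node of $\hat E_n$ be comparable to a node of $\hat E_{n'}$. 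Finally, because $(x_n)$ is a block sequence and the bijection $h\colon T\to\nn$ satisfies $t\sqsubset s\Rightarrow h(t)<h(s)$, the sets $\supp(x_n)$ lie in successive blocks of $\nn$; consequently, if $t\in\hat E_n$ and $t'\in\hat E_{n'}$ with $n<n'$ are comparable, then $t\sqsubset t'$.

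\emph{Step 2 (Ramsey dichotomy).} Colour each unordered pair $\{n,n'\}$ of natural numbers by $1$ if some node of $\hat E_n$ is comparable to some node of $\hat E_{n'}$, and by $0$ otherwise. By Ramsey's theorem fix an infinite homogeneous set $H$. If $H$ has colour $0$ then $H$ is the required $L$ and we are done; so it suffices to rule out an infinite $H$ of colour $1$.

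\emph{Step 3 (the linked case).} Assume $H$ is infinite of colour $1$. I would contradict the hypothesis by producing a single branch $\sg^*\in[T]$ lying in $U_h$ for infinitely many $h\in H$. The construction is a fusion: one builds a $\sqsubset$-increasing sequence of nodes $\rho_1\sqsubset\rho_2\sqsubset\cdots$, indices $g_1<g_2<\cdots$ in $H$, and infinite sets $H=H_0\supseteq H_1\supseteq H_2\supseteq\cdots$, so that at each stage $k\geq 1$ one has $g_k=\min H_{k-1}$, $\rho_k\in\hat E_{g_k}$, and every $\hat E_h$ with $h\in H_k$ has a node strictly above $\rho_k$; the step from $k$ to $k+1$ invokes the colour-$1$ property for the pairs $\{g_{k+1},h\}$ ($h\in H_k$) and then pigeonholes over the finitely many nodes of $\hat E_{g_{k+1}}$. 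Then $\sg^*=\bigcup_k\rho_k\in[T]$ extends every $\rho_k\in\hat E_{g_k}$, so $\sg^*\in\bigcap_k U_{g_k}$, contrary to the hypothesis that each $\sg$ lies in only finitely many $U_n$.

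The hard part is the inductive step of Step 3: it is not automatic that, after passing to a subfamily, the linking between two surviving antichains is witnessed by nodes lying above the current node $\rho_k$ — a ``stray'' linking node could a priori sit on a branch incomparable to $\rho_k$. This is where the block-ordering noted at the end of Step 1 enters: since every surviving $\hat E_h$ already has a node above $\rho_k$, a comparability between nodes of distinct blocks must point ``upwards'', and this, together with a careful (iterated) pigeonhole on the finitely many nodes of $\hat E_{g_{k+1}}$ that lie above $\rho_k$, is what makes the induction go through. Carrying out this bookkeeping is the technical core of the proof; the rest is routine.
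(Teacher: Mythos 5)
Your Steps 1 and 2 are fine: the translation of $\|P_\sg(x_n)\|\geq\ee$ into ``$\sg$ extends a node of the finite antichain $\hat E_n$'', the observation that comparabilities between $\hat E_n$ and $\hat E_{n'}$ ($n<n'$) must point upwards, and the Ramsey colouring are all correct. The problem is Step 3, and it is not the ``bookkeeping'' you defer: the statement you are trying to prove there is false at the level of generality at which you are working. After Step 1 you retain only the combinatorial data (finite antichains, block-ordering, pairwise linking, and the hypothesis that every branch extends nodes of only finitely many $\hat E_n$), and from this data no contradiction can be extracted in the colour-$1$ case. Concretely, in the tree $\nn^{<\nn}$ take $E_m=\{(m)\}\cup\{(n,m):n<m\}$ for $m\geq 1$. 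These are finite antichains, all comparabilities point upwards in the index, and every pair is linked (the node $(n)\in E_n$ lies below $(n,m)\in E_m$), so the colouring is identically $1$; yet every branch $\sg$ extends nodes of at most two of the $E_m$ (namely $E_{\sg(0)}$ and $E_{\sg(1)}$). Thus ``all pairs linked'' is perfectly consistent with ``every branch lies in only finitely many $U_n$'', and no fusion argument can produce the branch $\sg^*$ you want. Worse, in this configuration no two of the sets $U_n$ are disjoint, so even the conclusion of the lemma fails for such abstract antichains: the lemma is simply not a consequence of the comparability combinatorics alone.

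What saves the actual lemma is precisely the information you discard after Step 1: the sequence $(x_n)$ is bounded in $\txtwo$, whose norm is an $\ell_2$ sum over pairwise incomparable segments. For instance, in the configuration above the $\ee$-witnessing chains $\{(n),(n,m)\}$, $n<m$, together with $\{(m)\}$, are pairwise incomparable segments on each of which the corresponding vector would have norm at least $\ee$, forcing $\|x_m\|_{\txtwo}\geq\ee\sqrt{m}$; so bounded vectors cannot generate such antichain patterns. Any correct proof must exploit this kind of quantitative restriction on how the sets $\hat E_n$ can be spread (as the cited proof of \cite[Lemma 17]{AD} does -- note that the present paper does not reprove the lemma but quotes it), whereas your Steps 2--3 never use boundedness of $(x_n)$ or the $\ell_2$ structure at all. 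So the gap is genuine: the colour-$1$ case cannot be ruled out by your argument, and repairing it requires re-introducing the analytic hypotheses, not just a more careful pigeonhole.
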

The second result is the following special case of \cite[Proposition 33]{AD}.
\begin{prop}
\label{p314} Let $Y$ be a block $X$-singular subspace of $\txtwo$.
Then for every $\ee>0$ we may find a normalized block sequence
$(y_n)$ in $Y$ such that for every $\sg\in [T]$ we have
$\limsup \|P_\sg(y_n)\|<\ee$.
\end{prop}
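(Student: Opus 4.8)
The plan is to argue by contradiction, using the inclusion operator $\mathrm{I}:\txtwo\to\txco$ together with a Ramsey-type dichotomy inside the tree $T$. The starting observation is that for a \emph{block} vector $z\in\txtwo$ and every $\sg\in[T]$ one has $\|P_\sg(z)\|_{\txtwo}\le\|z\|_{\txco}$, and in fact $\|z\|_{\txco}=\sup_{\sg\in[T]}\|P_\sg(z)\|_{\txtwo}$: if $\supp(z)\subseteq\{t_m:m\in I\}$ for an interval $I$ of $\nn$, then by Fact \ref{f35} the set $\{\sg|n:n\ge1\}\cap\{t_m:m\in I\}$ is a segment $\seg_\sg$ of $T$ with $P_\sg(z)=P_{\seg_\sg}(z)$ and $\|P_{\seg_\sg}(z)\|_{\txtwo}=\|\sum_{t\in\seg_\sg}z(t)x_t\|_X\le\|z\|_{\txco}$; conversely, any segment $\seg$ extends to a branch $\tau\in[T]$, and then $P_\seg(z)$ is, in the bi-monotone basis of $X_\tau$, an interval restriction of $P_\tau(z)$, so $\|P_\seg(z)\|_{\txtwo}\le\|P_\tau(z)\|_{\txtwo}$. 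Granting a routine recursion over tail block subspaces of $Y$ (each of which is again block and $X$-singular), the proposition thus reduces to the single-vector statement: \emph{for every block $X$-singular subspace $W$ of $\txtwo$ and every $\delta>0$ there is a normalized block vector $w\in W$ with $\|w\|_{\txco}<\delta$}; equivalently, $\mathrm{I}$ is not bounded below on any block $X$-singular subspace.

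Suppose then, towards a contradiction, that $\|w\|_{\txco}\ge c\|w\|_{\txtwo}$ for all $w$ in some block $X$-singular $W$, with $c>0$. Fix a normalized block basis $(w_k)$ of a subspace of $W$. By the observation above, for each $k$ there is $\sg_k\in[T]$ with $\|P_{\sg_k}(w_k)\|\ge c/2$; let $\seg_k$ be the segment obtained by intersecting the branch $\sg_k$ with the support window of $w_k$, so that $\min(\seg_k),\max(\seg_k)\in\supp(w_k)$ and $\|P_{\seg_k}(w_k)\|\ge c/2$. Since the supports $\supp(w_k)$ are successive blocks of the enumeration $h$ (which respects $\sqsubset$), for $k<l$ no node of $\supp(w_l)$ is a proper $\sqsubseteq$-predecessor of a node of $\supp(w_k)$; in particular $\min(\seg_l)\not\sqsubset\min(\seg_k)$. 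Colouring the pair $\{k,l\}$ according to whether $\min(\seg_k)$ and $\min(\seg_l)$ are $\sqsubseteq$-comparable, Ramsey's theorem lets us pass to a subsequence for which either (a) the segments $\seg_k$ are pairwise incomparable, or (b) the nodes $\min(\seg_k)$ form a strictly $\sqsubseteq$-increasing chain, which — $T$ being pruned — determines a branch $\tau\in[T]$.

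In case (a) we have $P_{\seg_k}(w_j)=0$ for $j\neq k$ (since $\seg_k$ lies in the window of $w_k$), so plugging the pairwise incomparable family $(\seg_k)$ into the definition of the $\txtwo$-norm gives the lower estimate $\|\sum_k a_k w_k\|_{\txtwo}\ge(c/2)\big(\sum_k|a_k|^2\big)^{1/2}$; this is then played against the fact that a single segment of $T$ meets at most one $\seg_k$ to contradict $\mathrm{I}$ being bounded below. In case (b) one shows that $P_\tau$ is \emph{not} strictly singular on $\ospan\{w_k\}\subseteq W$, contradicting $X$-singularity of $W$. The delicate point — and what I expect to be the main obstacle — is that neither contradiction is automatic: in case (a) one must keep the contradiction inside $W$ rather than passing to the auxiliary span $\ospan\{P_{\seg_k}(w_k)\}$, and in case (b) the node $\min(\seg_k)$ sits on $\tau$ but the bulk of the vector $P_{\seg_k}(w_k)$ need not. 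The way around this is to perform the extraction of $(w_k)$, of the branches $\sg_k$, and — in case (b) — of $\tau$ \emph{simultaneously and recursively}: at each step one commits to a longer initial part of $\tau$ and to the finitely many segments already used, invokes strict singularity of the associated projections to push the corresponding components of the next vector below a prescribed tolerance, and only then selects $w_k\in W$ realizing — via the assumed lower bound $\|\cdot\|_{\txco}\ge c\|\cdot\|_{\txtwo}$ on a suitable subspace — its $\txco$-norm on a segment hanging essentially off the committed part of $\tau$ and essentially disjoint from the earlier ones; a stabilisation of the type of Lemma \ref{l313} is what makes this bookkeeping close. Since the resulting configuration is incompatible with either the $\ell_2$-geometry of $\txtwo$ or the $X$-singularity of $W$, no such $W$ exists and the proposition follows.
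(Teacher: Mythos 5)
Your reduction step is sound: if in every block $X$-singular subspace one can find, for each $\delta>0$, a normalized finitely supported vector $w$ with $\|w\|_{\txco}<\delta$, then picking such vectors in successive tail block subspaces of $Y$ yields a normalized block sequence with $\sup_{\sg\in[T]}\|P_\sg(y_n)\|<\ee$ for every $n$, which is more than the stated limsup condition (your identity $\|w\|_{\txco}=\sup_{\sg\in[T]}\|P_\sg(w)\|$, via Fact \ref{f35} and bi-monotonicity, is also correct). But be aware of what this reduction commits you to: the single-vector statement is exactly Corollary \ref{c311} (equivalently, Proposition \ref{p310} restricted to block subspaces), and in the paper the logical order is the reverse of yours --- Proposition \ref{p310}/Corollary \ref{c311} are \emph{deduced from} Proposition \ref{p314} together with Lemma \ref{l313}, while \ref{p314} itself is not proved here at all but imported from \cite[Proposition 33]{AD}. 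So your route obliges you to prove from scratch a statement at least as strong as the one in question, and your sketch of that proof essentially replays the paper's proof of \ref{p310} with its key input removed.

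That missing input is precisely the gap you flag yourself, and it is not repairable by the recursion you describe. In case (a), the incomparable family $(\seg_k)$ does give the lower $\ell_2$ estimate, but to contradict boundedness below of $\mathrm{I}$ you must show $\big\|\sum_{k\leq N}w_k\big\|_{\txco}=o(\sqrt{N})$, i.e.\ that no single segment of $T$ picks up mass $\geq\ee$ from more than (essentially) one $w_k$; nothing in your extraction prevents every $w_k$ from carrying, besides its designated $\seg_k$, a component of norm $1/2$ along one and the same branch unrelated to the committed data. In case (b), $\tau$ only passes through the nodes $\min(\seg_k)$, so $\|P_\tau(w_k)\|$ can be arbitrarily small and $P_\tau$ need not be bounded below on any subspace, as you concede. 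Your proposed fix --- recursively committing to finitely many segments and an initial piece of $\tau$ and invoking strict singularity of the corresponding projections --- controls at most countably many branches, whereas the required smallness must be uniform over \emph{all} segments, and $[T]$ is in general uncountable. Moreover, the ``stabilisation of the type of Lemma \ref{l313}'' you appeal to cannot be used here: the hypothesis of Lemma \ref{l313} is exactly the conclusion of Proposition \ref{p314} (a block sequence with $\limsup_n\|P_\sg(x_n)\|<\ee$ for every $\sg$), so invoking it inside a proof of \ref{p314} is circular. The uniform control over all branches is the genuinely hard combinatorial content of \cite[Proposition 33]{AD}, and it is absent from your argument; as it stands the proposal is a correct reduction plus an acknowledged, unresolved core.
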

We are ready to proceed to the proof of Proposition \ref{p310}.
\begin{proof}[Proof of Proposition \ref{p310}]
It is clear that (ii) implies (i). Hence we only need to show the
converse implication. We argue by contradiction. So, assume that
$Y$ is an $X$-singular subspace of $\txtwo$ such that the operator
$\mathrm{I}:Y\to\txco$ is not strictly singular. By definition,
there exists a further subspace $Y'$ of $Y$ such that $\mathrm{I}:
Y'\to\txco$ is an isomorphic embedding. Using a sliding hump
argument, we may recursively select a normalized basic sequence
$(y_n)$ in $Y'$ and a normalized block sequence $(z_n)$ in
$\txtwo$ such that, setting $Z=\ospan\{z_n:n\in\nn\}$, the following
are satisfied.
\begin{enumerate}
\item[(a)] The sequence $(z_n)$ is equivalent to $(y_n)$.
\item[(b)] The subspace $Z$ of $\txtwo$ is $X$-singular.
\item[(c)] The operator $\mathrm{I}:Z\to\txco$ is an isomorphic
embedding.
\end{enumerate}
The selection is fairly standard (we leave the details
to the interested reader). By (c) above, there exists a constant
$C>0$ such that for every $z\in Z$ we have
\begin{equation}
\label{e9}  C\|z\|_{\txtwo}\leq \|z\|_{\txco} \leq \|z\|_{\txtwo}.
\end{equation}
We fix $k_0\in\nn$ and $\ee>0$ satisfying
\begin{equation}
\label{e10} k_0>\frac{64}{C^4} \ \text{ and } \
\ee<\min\Big\{ \frac{C}{2}, \frac{1}{k_0}\Big\}.
\end{equation}
By (b) above, we may apply Proposition \ref{p314} to the block subspace
$Z$ of $\txtwo$ and the chosen $\ee$. It follows that there exists a
normalized block sequence $(x_n)$ in $Z$ such that $\limsup\|P_\sg(x_n)\|<\ee$
for every $\sg\in [T]$. By Lemma \ref{l313} and by passing to a subsequence
of $(x_n)$ if necessary, we may additionally assume that for every
$\sg\in [T]$ we have $|\{n\in\nn:\|P_\sg(x_n)\|\geq\ee\}|\leq 1$.
As the basis of $\txtwo$ is bi-monotone, we may strengthen this
property to the following one.
\begin{enumerate}
\item[(d)] For every segment $\seg$ of $T$ we have
$|\{n\in\nn: \|P_\seg(x_n)\|\geq\ee\}|\leq 1$.
\end{enumerate}
By Fact \ref{f35} and (\ref{e9}), for every $n\in\nn$ we may
select a segment $\seg_n$ of $T$ such that
\begin{enumerate}
\item[(e)] $\|P_{\seg_n}(x_n)\|\geq C$ and
\item[(f)] $\seg_n\subseteq\{t_k:k\in\range(x_n)\}$.
\end{enumerate}
As the sequence $(x_n)$ is block, we see that such a selection guarantees
that
\begin{enumerate}
\item[(g)] $\|P_{\seg_n}(x_m)\|=0$ for every $n,m\in\nn$ with $n\neq m$.
\end{enumerate}
We set $t_n=\min(\seg_n)$. Applying the classical Ramsey Theorem
we find an infinite subset $L=\{l_0<l_1<l_2<...\}$ of $\nn$ such that
one of the following (mutually exclusive) cases must occur.
\medskip

\noindent \textsc{Case 1.} \textit{The set $\{t_n:n\in L\}$ is an antichain.}
Our hypothesis in this case implies that for every $n,m\in L$ with $n\neq m$ the
segments $\seg_n$ and $\seg_m$ are incomparable.
We define $z=x_{l_0}+...+x_{l_{k_0}}$. As the family
$(\seg_{l_i})_{i=0}^{k_0}$ consists of pairwise incomparable
segments of $T$, we get that
\begin{equation}
\label{e11} \|z\|\geq \Big( \sum_{i=0}^{k_0} \|P_{\seg_{l_i}}(z)\|^2 \Big)^{1/2}
\stackrel{(\mathrm{g})}{=} \Big( \sum_{i=0}^{k_0} \|P_{\seg_{l_i}}(x_{l_i})\|^2 \Big)^{1/2}
\stackrel{(\mathrm{e})}{\geq} C \sqrt{k_0+1}.
\end{equation}
Now we set $w=z/\|z\|\in Z$. Invoking (d) above, inequality
(\ref{e11}) and the choice of $k_0$ and $\ee$ made in (\ref{e10}),
for every segment $\seg$ of $T$ we have
\[ \|P_\seg(w)\|\leq \frac{1+k_0\ee}{C\sqrt{k_0+1}}<\frac{C}{2}. \]
It follows that
\[ \|w\|_{\txco}\leq \frac{C}{2} \]
which contradicts inequality (\ref{e9}). Hence this case is impossible.
\medskip

\noindent \textsc{Case 2.} \textit{The set $\{t_n:n\in L\}$ is a chain.}
Let $\tau\in [T]$ be the branch of $T$ determined by the infinite
chain $\{t_n:n\in L\}$. By (d) above and by passing to an infinite
subset of $L$ if necessary, we may assume that $\|P_\tau(x_n)\|<\ee$
for every $n\in L$. The basis of $\txtwo$ is bi-monotone, and so,
we have the following property.
\begin{enumerate}
\item[(h)] If $\seg$ is a segment of $T$ with $\seg\subseteq \tau$,
then $\|P_\seg(x_n)\|<\ee$ for every $n\in L$.
\end{enumerate}
We set $\seg'_n=\seg_n\setminus \tau$. Observe
that the set $\seg'_n$ is a sub-segment of $\seg_n$.
Notice that $\seg_n$ is the disjoint union of the
successive segments $\seg_n\cap \tau$ and $\seg'_n$.
Hence, by properties (e) and (h) above and the choice
of $\ee$, we see that
\begin{equation}
\label{e12} \|P_{\seg'_n}(x_n)\|\geq C-\ee\geq \frac{C}{2}
\end{equation}
for every $n\in L$. Notice also that if $n,m\in L$ with $n\neq m$,
then the segments $\seg'_n$ and $\seg'_m$ are incomparable.
We set
\[ z=x_{l_0}+...+x_{l_{k_0}} \ \text{ and } \ w=\frac{z}{\|z\|}.\]
Arguing precisely as in Case 1 and using the estimate in (\ref{e12}),
we conclude that
\[ \|w\|_{\txco}\leq \frac{C}{2}.\]
This is again a contradiction. The proof of Proposition \ref{p310}
is completed.
\end{proof}

%----Proof of Theorem \ref{t37} and of Theorem \ref{t38}-----%

\subsection{Proof of Theorem \ref{t37} and of Theorem \ref{t38}}
We start with the following lemma.
\begin{lem}
\label{l315} Let $\stblng$ be a Schauder tree basis. Let
$(w_n)$ be a normalized block sequence in $\txtwo$ such
that for every $n\in\nn$ with $n\geq 1$ and every segment
$\seg$ of $T$ we have
\begin{equation}
\label{e14} \|P_\seg(w_n)\|\leq \frac{1}{\sum_{i=0}^{n-1} |\supp(w_i)|^{1/2}}
\cdot \frac{1}{2^{n+2}}.
\end{equation}
Then the following are satisfied.
\begin{enumerate}
\item[(i)] The sequence $(w_n)$ is unconditional.
\item[(ii)] The sequence $(w_n)$ satisfies an upper
$\ell_2$ estimate.
\end{enumerate}
\end{lem}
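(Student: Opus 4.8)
The plan is to bound, for an arbitrary finite family $(\seg_j)_{j=0}^{l}$ of pairwise incomparable segments of $T$ and arbitrary finitely supported scalars, the quantity $\sum_{j}\big\|\sum_{t\in\seg_j}z(t)x_t\big\|_X^{2}$ appearing in \eqref{e6} for $z=\sum_{n}a_n w_n$, and to show that its supremum over all such families is at most an absolute constant times $\sum_n|a_n|^2$ — giving \emph{(ii)} — and at most an absolute constant times $\big\|\sum_n b_n w_n\big\|_{\txtwo}^2$ whenever $|a_n|\le|b_n|$ for all $n$ — giving \emph{(i)}, since taking $a_n=b_n$ for $n$ in a given set and $a_n=0$ elsewhere then recovers the defining inequality \eqref{e13}.

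Before the main estimate I would record three routine facts. First, for every $v\in\txtwo$ and every segment $\seg$ one has $\|P_\seg(v)\|_{\txtwo}=\big\|\sum_{t\in\seg}v(t)x_t\big\|_X$, since $\xxx_\seg$ is isometric to $\ospan\{x_t:t\in\seg\}$ under $e_t\mapsto x_t$; hence, straight from \eqref{e6}, $\sum_i\|P_{\seg_i}(v)\|_{\txtwo}^2\le\|v\|_{\txtwo}^2$ for every pairwise incomparable family $(\seg_i)_i$. Second, being a block sequence of the bi-monotone basis of $\txtwo$, $(w_n)$ is itself bi-monotone, so $|a_n|=\|a_n w_n\|_{\txtwo}\le\big\|\sum_m a_m w_m\big\|_{\txtwo}$. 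Third, because the $w_n$ have pairwise disjoint supports lying in successive $h$-ranges, the set $\seg\cap\{t_i:i\in\range(w_m)\}$ is again a segment (Fact \ref{f35}) and $P_{\seg\cap\{t_i:i\in\range(w_m)\}}(w_{m'})=0$ for $m'\neq m$.

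Now fix $(\seg_j)_{j=0}^{l}$ pairwise incomparable, set $c_{j,n}=\|P_{\seg_j}(w_n)\|_{\txtwo}$, and for each $j$ whose segment meets $\supp(z)$ let $n_j=\min\{n:c_{j,n}\neq0\}$. From the triangle inequality in $X$,
\[
\Big\|\sum_{t\in\seg_j}z(t)x_t\Big\|_X\ \le\ |a_{n_j}|\,c_{j,n_j}\ +\ \sum_{n>n_j}|a_n|\,c_{j,n}.
\]
For the tail, hypothesis \eqref{e14} gives, for $n>n_j$, $c_{j,n}\le 2^{-n-2}\big/\sum_{i=0}^{n-1}|\supp(w_i)|^{1/2}\le|\supp(w_{n_j})|^{-1/2}\,2^{-n-2}$ (the index $n_j$ occurs in that sum), so summing a geometric series bounds the tail by an absolute constant times $|\supp(w_{n_j})|^{-1/2}\,2^{-n_j}\,s$, where $s$ may be taken to be $\max_n|a_n|$ or $(\sum_n|a_n|^2)^{1/2}$. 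Squaring, summing over $j$ and grouping by $n_j=m$, the tail contribution to $\sum_j\big\|\sum_{t\in\seg_j}z(t)x_t\big\|_X^2$ is at most an absolute constant times $s^2\sum_m 2^{-2m}|\supp(w_m)|^{-1}\,\#\{j:n_j=m\}$; since the $\seg_j$ are incomparable, each node of $\supp(w_m)$ lies in at most one $\seg_j$, so $\#\{j:n_j=m\}\le|\supp(w_m)|$ and the sum is $\le\sum_m 2^{-2m}<\infty$. For the leading terms I would use two different bounds: (a) grouping by $n_j=m$ and using incomparability, $\sum_{j:n_j=m}c_{j,m}^2\le\|w_m\|_{\txtwo}^2=1$, hence $\sum_j|a_{n_j}|^2c_{j,n_j}^2\le\sum_m|a_m|^2$; (b) the segments $\seg_j\cap\{t_i:i\in\range(w_{n_j})\}$ are again pairwise incomparable, and applying the corresponding projection to $\sum_m b_m w_m$ isolates the $w_{n_j}$-coordinates, so that piece has $\txtwo$-norm $|b_{n_j}|\,c_{j,n_j}$, whence by \eqref{e6} and $|a_n|\le|b_n|$ we get $\sum_j|a_{n_j}|^2c_{j,n_j}^2\le\sum_j|b_{n_j}|^2c_{j,n_j}^2\le\big\|\sum_m b_m w_m\big\|_{\txtwo}^2$. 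Combining (a) with the tail estimate for $s=(\sum_n|a_n|^2)^{1/2}$ yields \emph{(ii)}; combining (b) with the tail estimate for $s=\max_n|a_n|\le\max_n|b_n|\le\big\|\sum_m b_m w_m\big\|_{\txtwo}$ yields $\big\|\sum_n a_n w_n\big\|_{\txtwo}\le C\big\|\sum_n b_n w_n\big\|_{\txtwo}$ whenever $|a_n|\le|b_n|$, i.e.\ \emph{(i)}.

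The step I expect to be the crux is the leading-term bound (b) needed for unconditionality: the naive estimate $\sum_j|a_{n_j}|^2c_{j,n_j}^2\le\sum_m|a_m|^2$ is worthless there (a $c_0$-like block sequence already satisfies \eqref{e14}), so one must realize this sum as an actual $\txtwo$-norm of $\sum_m b_m w_m$ by restricting each segment to the range of its first contributing block. The second, more mechanical, point is that the weight $\sum_{i<n}|\supp(w_i)|^{1/2}$ in \eqref{e14} is exactly the strength needed so that the factor $|\supp(w_{n_j})|^{-1}$ coming out of the squared tail cancels the count $\#\{j:n_j=m\}\le|\supp(w_m)|$ of segments that can begin inside $\supp(w_m)$, leaving a convergent series in $m$.
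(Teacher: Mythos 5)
Your proof is correct and, for part (i), follows essentially the same route as the paper: the same decomposition of $\|P_{\seg_j}(z)\|$ into the contribution of the first block $w_{n_j}$ that $\seg_j$ meets plus a geometrically summable tail, the same restriction of each segment to $\{t_i:i\in\range(w_{n_j})\}$ so that the leading terms assemble into a single $\txtwo$-norm of the full vector (the paper's segments $\seg'_j$ and its properties (d) and (e)), and the same counting bound $\#\{j:n_j=m\}\leq |\supp(w_m)|$ (the paper's $|\Delta_i|\leq|\supp(w_{n_i})|$) cancelling the weight in (\ref{e14}). The only differences are minor: you prove the full coefficient-domination inequality rather than the suppression-type bound (\ref{e13}) obtained in the paper, and you give a self-contained proof of part (ii) by the same estimate, whereas the paper delegates (ii) to \cite[Proposition 21]{AD}.
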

\begin{proof}
We will only give the proof of part (i). For a proof of part (ii)
we refer to \cite[Proposition 21]{AD}.

So, let $k\in\nn$ and $a_0,...,a_k\in\rr$ be such that
$\|\sum_{n=0}^k a_n w_n\|=1$. Let also $F\subseteq \{0,...,k\}$
with $F=\{n_0<...<n_p\}$ its increasing enumeration. We will
show that $\|\sum_{n\in F} a_n w_n\|\leq \sqrt{3}$. This
will clearly finish the proof. For notational simplicity, we set
\[ w=\sum_{n=0}^k a_n w_n \ \text{ and } \ z=\sum_{n\in F} a_n w_n. \]

Let $(\seg_j)_{j=0}^l$ be an arbitrary collection of pairwise
incomparable segments of $T$. We want to estimate the sum
$\sum_{j=0}^l \|P_{\seg_j}(z)\|^2$. To this end, we may assume
that for every $j\in \{0,...,l\}$ there exists $i\in\{0,...,p\}$
with $\seg_j\cap \supp(w_{n_i})\neq\varnothing$.
We define recursively a partition
$(\Delta_i)_{i=0}^p$ of $\{0,...,l\}$ by the rule
\begin{eqnarray*}
\Delta_0 & = & \big\{ j\in\{0,...,l\}: \seg_j\cap \supp(w_{n_0})\neq\varnothing\big\}\\
\Delta_1 & = & \big\{ j\in\{0,...,l\}\setminus \Delta_0: \seg_j\cap \supp(w_{n_1})\neq
\varnothing\big\}\\
\vdots & & \\
\Delta_p & = & \Big\{ j\in \{0,...,l\}\setminus \Big( \bigcup_{i=0}^{p-1} \Delta_i\Big):
\seg_j\cap \supp(w_{n_p})\neq\varnothing\Big\}.
\end{eqnarray*}
The segments $(\seg_j)_{j=0}^l$ are pairwise incomparable and
a fortiori disjoint. It follows that
\begin{equation}
\label{e15} |\Delta_i|\leq |\supp(w_{n_i})| \ \text{ for every }
i\in \{0,...,p\}.
\end{equation}
Notice also that for every $0\leq i< q\leq p$ we have
\begin{equation}
\label{e16} \sum_{j\in \Delta_q} \|P_{\seg_j}(w_{n_i})\|=0.
\end{equation}

Let $j\in\{0,...,l\}$. There exists a unique $i\in \{0,...,p\}$
such that $j\in \Delta_i$. By Fact \ref{f35}, we may select a
segment $\seg'_j$ of $T$ such that
\begin{enumerate}
\item[(a)] $\seg'_j\subseteq \seg_j$,
\item[(b)] $\seg'_j\subseteq \{t_m:m\in\range(w_{n_i})\}$ and
\item[(c)] $\|P_{\seg_j}(a_{n_i} w_{n_i})\|=
\|P_{\seg'_j}(a_{n_i} w_{n_i})\|$.
\end{enumerate}
The above selection guarantees the following properties.
\begin{enumerate}
\item[(d)] The family $(\seg'_j)_{j=0}^l$ consists of
pairwise incomparable segment of $T$. This is a straightforward
consequence of (a) above and of our assumptions on the family
$(\seg_j)_{j=0}^l$.
\item[(e)] We have $\|P_{\seg_j}(a_{n_i} w_{n_i})\|=
\|P_{\seg'_j}(a_{n_i} w_{n_i})\| = \|P_{\seg'_j}(w)\|$.
This is a consequence of (b) and (c) above and of the fact
that the sequence $(w_n)$ is block.
\end{enumerate}

We are ready for the final part of the argument. Let
$i\in\{0,...,p\}$ and $j\in \Delta_i$. Our goal is
to estimate the quantity $\|P_{\seg_j}(z)\|$. First we notice
that
\begin{eqnarray*}
\|P_{\seg_j}(z)\| & \stackrel{(\ref{e16})}{=} &
\|P_{\seg_j}(a_{n_i} w_{n_i} +...+ a_{n_p} w_{n_p})\| \\
& \leq & \|P_{\seg_j}(a_{n_i}w_{n_i})\|+ \sum_{q=i+1}^p |a_{n_q}|\cdot \|P_{\seg_j}(w_{n_q})\|.
\end{eqnarray*}
Invoking the fact that the Schauder basis $(e_t)_{t\in T}$ of $\txtwo$
is bi-monotone and (\ref{e14}), we see that for every
$q\in\{i+1,...,p\}$ we have $\|P_{\seg_j}(w_{n_q})\|\leq
|\supp(w_{n_i})|^{-1/2} \cdot 2^{-(q+2)}$
and $|a_{n_q}|\leq 1$. Hence, the previous estimate yields
\begin{eqnarray*}
\|P_{\seg_j}(z)\| & \leq &
\|P_{\seg_j}(a_{n_i}w_{n_i})\|+ \frac{1}{|\supp(w_{n_i})|^{1/2}}\cdot
\sum_{q=i+1}^p \frac{1}{2^{q+2}} \nonumber \\
& \stackrel{(\ref{e15})}{\leq} & \|P_{\seg_j}(a_{n_i}w_{n_i})\|+
\frac{1}{|\Delta_i|^{1/2}}\cdot \frac{1}{2^{i+2}} \\
& \stackrel{(\mathrm{e})}{=} & \|P_{\seg'_j}(w)\|+
\frac{1}{|\Delta_i|^{1/2}}\cdot \frac{1}{2^{i+2}}.
\end{eqnarray*}
The above inequality, in turn, implies that if $\Delta_i$ is non-empty,
then
\begin{eqnarray}
\label{e17} \sum_{j\in \Delta_i} \|P_{\seg_j}(z)\|^2 & \leq &
2 \sum_{j\in \Delta_i} \|P_{\seg'_j}(w)\|^2 + 2 \sum_{j\in \Delta_i}
\frac{1}{|\Delta_i|} \cdot \frac{1}{2^{i+2}} \nonumber \\
& \leq & 2 \sum_{j\in \Delta_i} \|P_{\seg'_j}(w)\|^2 + \frac{1}{2^{i+1}}.
\end{eqnarray}
Summarizing, we see that
\[\sum_{j=0}^l \|P_{\seg_j}(z)\|^2  =
\sum_{i=0}^p \sum_{j\in \Delta_i} \|P_{\seg_j}(z)\|^2
\stackrel{(\ref{e17})}{\leq} 2 \sum_{j=0}^l \|P_{\seg'_j}(w)\|^2 + 1
\stackrel{(\mathrm{d})}{\leq} 2 \|w\|^2+1 \leq 3.\]
The family $(\seg_j)_{j=0}^l$ was arbitrary, and so,
$\|z\|\leq \sqrt{3}$. The proof is completed.
\end{proof}
We continue with the proof of Theorem \ref{t37}.
\begin{proof}[Proof of Theorem \ref{t37}]
Let $Y$ be an $X$-singular subspace of $\txtwo$. Clearly
every subspace $Y'$ of $Y$ is also $X$-singular. Hence,
it is enough to show that every $X$-singular subspace
contains an unconditional basic sequence. So, let $Y$
be one. Using a sliding hump argument, we may additionally
assume that $Y$ is a block subspace of $\txtwo$. Recursively
and with the help of Corollary \ref{c311}, we may construct
a normalized block sequence $(w_n)$ in $Y$ such that for
every $n\in\nn$ with $n\geq 1$ and every segment $\seg$ of
$T$ we have
\[ \|P_\seg(w_n)\|\leq \frac{1}{\sum_{i=0}^{n-1} |\supp(w_i)|^{1/2}}
\cdot \frac{1}{2^{n+2}}.\]
By Lemma \ref{l315}(i), the sequence $(w_n)$ is unconditional.
The proof is completed.
\end{proof}
We proceed to the proof of Theorem \ref{t38}.
\begin{proof}[Proof of Theorem \ref{t38}]
Let $Y$ be an $X$-singular subspace of $\txtwo$. Let also $(x_n)$ be
a normalized Schauder basic sequence in $Y$. A standard sliding
hump argument allows us to construct a normalized block sequence
$(v_n)$ of $(x_n)$ and a block sequence $(z_n)$ in $\txtwo$ such that,
setting $Z=\ospan\{z_n:n\in\nn\}$, the following are satisfied.
\begin{enumerate}
\item[(a)] The sequences $(v_n)$ and $(z_n)$ are equivalent.
\item[(b)] The subspace $Z$ of $\txtwo$ is $X$-singular.
\end{enumerate}
As in the proof of Theorem \ref{t37}, using (b) above and
Corollary \ref{c311}, we construct a normalized block sequence
$(w_n)$ of $(z_n)$ such that for every $n\in\nn$ with $n\geq 1$
and every segment $\seg$ of $T$ inequality (\ref{e14}) is satisfied
for the sequence $(w_n)$. By Lemma \ref{l315}(ii), the sequence
$(w_n)$ satisfies an upper $\ell_2$ estimate. Let $(b_n)$ be the
block sequence of $(v_n)$ corresponding to $(w_n)$. Observe that,
by (a) above, the sequence $(b_n)$ is seminormalized and satisfies
an upper $\ell_2$ estimate. The property of being a block sequence
is transitive, and so, $(b_n)$ is a normalized block sequence of
$(x_n)$ as well. Hence, setting $y_n=b_n/\|b_n\|$ for every $n\in\nn$,
we see that the sequence $(y_n)$ is the desired one.

Finally, to see that every $X$-singular subspace of $\txtwo$
can contain no $\ell_p$ for $1\leq p<2$, we argue by contradiction.
So, assume that $Y$ is an $X$-singular subspace of $\txtwo$ containing
an isomorphic copy of $\ell_{p_0}$ for some $1\leq p_0<2$. There exists,
in such a case, a normalized basic sequence $(x_n)$ in $Y$ which is
equivalent to the standard unit vector basis $(e_n)$ of $\ell_{p_0}$.
Let $(y_n)$ be a normalized block subsequence of $(x_n)$ satisfying
an upper $\ell_2$ estimate. As any normalized block subsequence of
$(e_n)$ is equivalent to $(e_n)$ (see \cite{LT}), we see that there
must exist constants $C\geq c>0$ such that for every $k\in\nn$ and every
$a_0,...,a_k\in\rr$ we have
\[ c \Big( \sum_{n=0}^k |a_n|^{p_0}\Big)^{1/p_0} \leq
\big\| \sum_{n=0}^k a_n y_n \big\|_{\txtwo} \leq
C \Big( \sum_{n=0}^k |a_n|^{2}\Big)^{1/2}.\]
This is clearly a contradiction. The proof is completed.
\end{proof}
We close this section by recording the following consequence
of Theorem \ref{t38}.
\begin{cor}
\label{c316} Let $\stblng$ be a Schauder tree basis. Let
$1\leq p<2$. Then the following are equivalent.
\begin{enumerate}
\item[(i)] The space $\txtwo$ contains an isomorphic copy of
$\ell_p$.
\item[(ii)] There exists $\sg\in [T]$ such that $X_\sg$
contains an isomorphic copy of $\ell_p$.
\end{enumerate}
\end{cor}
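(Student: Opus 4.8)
The plan is to prove the two implications separately. The implication (ii)$\Rightarrow$(i) is essentially immediate from the structure of $\txtwo$: if $X_\sg$ contains an isomorphic copy of $\ell_p$, then so does $\xxx_\sg$, since $\xxx_\sg$ is isometric to $X_\sg$ by the discussion following \eqref{e7}; and $\xxx_\sg$ is a (1-complemented) subspace of $\txtwo$, so $\ell_p$ embeds isomorphically into $\txtwo$. No work is needed here beyond citing the setup.

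For the substantive direction (i)$\Rightarrow$(ii), I would argue by contradiction. Suppose $\txtwo$ contains an isomorphic copy of $\ell_p$ with $1\leq p<2$, but no $X_\sg$ contains an isomorphic copy of $\ell_p$. Fix a normalized basic sequence $(x_n)$ in $\txtwo$ equivalent to the unit vector basis $(e_n)$ of $\ell_p$, and let $Y=\ospan\{x_n:n\in\nn\}$, so that $Y\cong\ell_p$. The goal is to produce inside $Y$ a further infinite-dimensional subspace which is $X$-singular; once this is done, Theorem \ref{t38} forbids that subspace from containing $\ell_p$, whereas every infinite-dimensional subspace of $\ell_p$ contains $\ell_p$ (indeed is isomorphic to $\ell_p$), which is the desired contradiction.

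The key step, then, is the dichotomy that produces an $X$-singular subspace. Here I would invoke Lemma \ref{l24}-type reasoning relative to the projections $P_\sg$, but the cleaner route is a standard stabilization argument: for each $\sg\in[T]$, either some subspace of $Y$ embeds into $\xxx_\sg$ via $P_\sg$, or $P_\sg$ is strictly singular on $Y$. If for every $\sg$ the operator $P_\sg$ were strictly singular on every subspace of $Y$, then $Y$ itself (or a block subspace of it, after a sliding hump) is $X$-singular and we are done by Theorem \ref{t38}. Otherwise, there is $\sg\in[T]$ and a subspace $Y'$ of $Y$ with $P_\sg:Y'\to\xxx_\sg$ an isomorphic embedding; but $Y'$ is isomorphic to $\ell_p$ (being an infinite-dimensional subspace of $\ell_p$), so $\xxx_\sg\cong X_\sg$ would contain an isomorphic copy of $\ell_p$, contradicting our assumption. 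The main obstacle is the passage to a block subspace that is genuinely $X$-singular: strict singularity of each individual $P_\sg$ on $Y$ must be upgraded, using the separability of $[T]$ only implicitly, to the statement that \emph{some} subspace is $X$-singular for \emph{all} $\sg$ simultaneously. This is exactly the content encoded in Proposition \ref{p310} together with the fact that $\mathrm{I}:\txtwo\to\txco$ behaves well under sliding hump; alternatively, if $\ell_p$ embeds but is not $X$-singular, a diagonalization over a countable dense-in-$[T]$ family of branches combined with Kato's theorem (Proposition 2.c.4 of \cite{LT}) extracts the $X$-singular block subspace. I would phrase the final write-up so that Theorem \ref{t38} is applied to this block subspace, yielding the upper $\ell_2$ estimate that is incompatible with containing $\ell_{p_0}$ for $p_0<2$, exactly as in the last paragraph of the proof of Theorem \ref{t38}.
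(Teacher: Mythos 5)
Your proof is correct, and at its core it is the same argument as the paper's: (ii)$\Rightarrow$(i) via the isometry between $\xxx_\sg$ and $X_\sg$, and (i)$\Rightarrow$(ii) by playing Theorem \ref{t38} against a subspace $Y$ of $\txtwo$ isomorphic to $\ell_p$. However, the step you single out as ``the main obstacle'' is not an obstacle at all. By Definition \ref{d36}(a), $Y$ is $X$-singular precisely when $P_\sg:Y\to\xxx_\sg$ is strictly singular for \emph{every} $\sg\in[T]$; there is nothing to upgrade, no block subspace to extract, and no diagonalization over a countable family of branches to perform, because Theorem \ref{t38} applies to arbitrary $X$-singular subspaces, not only block ones. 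So your dichotomy collapses to exactly the paper's two lines: either $Y$ is $X$-singular, which is impossible by Theorem \ref{t38} since $Y\cong\ell_p$ with $1\leq p<2$, or there exist $\sg\in[T]$ and an infinite-dimensional subspace $Y'$ of $Y$ on which $P_\sg$ is an isomorphic embedding, whence $X_\sg$ contains a copy of $\ell_p$; the appeals to Proposition \ref{p310}, Kato's theorem and a sliding hump are superfluous. One small correction: an infinite-dimensional subspace of $\ell_p$ with $p\neq 2$ need not be isomorphic to $\ell_p$ (there are, for instance, subspaces failing the approximation property); it merely contains a further copy of $\ell_p$, which is all your argument (and the paper's) actually uses.
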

\begin{proof}
It is clear that (ii) implies (i). Conversely, assume that
$\ell_p$ embeds into $\txtwo$ and let $Y$ be a subspace of $\txtwo$
which is isomorphic to $\ell_p$. By Theorem \ref{t38}, we
see that $Y$ is not $X$-singular. Hence, there exist $\sg\in [T]$
and an infinite-dimensional subspace $Y'$ of $Y$ such that
$P_\sg:Y'\to\xxx_\sg$ is an isomorphic embedding. Recalling
that every subspace of $\ell_p$ contains a copy of $\ell_p$
and that the spaces $\xxx_\sg$ and $X_\sg$ are isometric, the
result follows.
\end{proof}

%-------------------------------------------------------------%
%                        The main result                      %
%-------------------------------------------------------------%

\section{The main result}

This section is devoted to the proof of Theorem \ref{int1} stated
in the introduction. To this end, we will need the following
correspondence principle between analytic classes of separable
Banach spaces and Schauder tree bases (see \cite[Proposition 83]{AD}
or \cite[Lemma 32]{D}).
\begin{lem}
\label{l41} Let $\aaa'$ be an analytic subset of $\sbs$ such that
every $Y\in\aaa'$ has a Schauder basis. Then there exist a separable
Banach space $X$, a pruned tree $T$ on $\nn\times\nn$ and a normalized
sequence $(x_t)_{t\in T}$ in $X$ such that the following are
satisfied.
\begin{enumerate}
\item[(i)] The family $\stblng$ is a Schauder tree basis.
\item[(ii)] For every $Y\in\aaa'$ there exists $\sg\in [T]$
with $Y\cong X_\sg$.
\item[(iii)] For every $\sg\in [T]$ there exists $Y\in\aaa'$
with $X_\sg\cong Y$.
\end{enumerate}
\end{lem}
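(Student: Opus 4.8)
The plan is to realize every space of $\aaa'$, up to isomorphism, as a branch subspace $X_\sg$ of a single Schauder tree basis sitting inside a fixed space carrying a ``universal'' basis. First I would invoke a classical result of Pe{\l}czy\'{n}ski \cite{P}: there is a separable Banach space $U$ carrying a normalized bi-monotone Schauder basis $(u_n)$ such that every normalized Schauder basic sequence is equivalent to a subsequence of $(u_n)$; fix once and for all an isometric embedding $U\hookrightarrow C[0,1]$, so that closed linear spans of subsequences of $(u_n)$ are elements of $\sbs$. The point of working with $(u_n)$ is twofold: \emph{every} subsequence of $(u_n)$ is again a normalized bi-monotone Schauder basic sequence, so condition (b) of Definition \ref{d31} will come for free; and, since every $Y\in\aaa'$ has a Schauder basis, universality yields a strictly increasing $\alpha\in\nn^\nn$ with $\ospan\{u_{\alpha(k)}:k\in\nn\}\cong Y$.

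Next I would replace $\aaa'$ by its isomorphic saturation $\widehat{\aaa'}=\{Z\in\sbs:Z\cong Y\text{ for some }Y\in\aaa'\}$, which is analytic because $\aaa'$ is analytic and the isomorphism relation on $\sbs$ is analytic \cite{Bos}. A routine Effros--Borel computation shows that, identifying a strictly increasing $\alpha\in\nn^\nn$ with the subsequence $(u_{\alpha(k)})_k$ of $(u_n)$, the map $\alpha\mapsto\ospan\{u_{\alpha(k)}:k\in\nn\}$ is Borel; hence the set
\[ \mathcal{C}=\{\alpha\in\nn^\nn:\alpha\text{ is strictly increasing and }\ospan\{u_{\alpha(k)}:k\in\nn\}\in\widehat{\aaa'}\} \]
is analytic. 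Assuming harmlessly that $\aaa'\neq\varnothing$, $\mathcal{C}$ is non-empty; the spaces $\ospan\{u_{\alpha(k)}:k\in\nn\}$ for $\alpha\in\mathcal{C}$ are exactly those isomorphic to a member of $\aaa'$, and by universality this family contains an isomorphic copy of every $Y\in\aaa'$.

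Then I would turn the analytic set $\mathcal{C}$ into the body of a tree. Write $\mathcal{C}=p[R]$ for a pruned tree $R$ on $\nn\times\nn$, i.e.\ $\alpha\in\mathcal{C}$ iff there is $\beta\in\nn^\nn$ with $\big((\alpha(0),\beta(0)),(\alpha(1),\beta(1)),\dots\big)\in[R]$, and set $T:=R$, a pruned tree on $\Lambda:=\nn\times\nn$. For a node $w\in T$ of length $m\geq 1$ whose last entry is the pair $(a,b)$, put $x_w:=u_a$, and let $X:=\ospan\{x_w:w\in T\}\subseteq U$. Then $(x_t)_{t\in T}$ is a normalized sequence indexed by $T$ with $X=\ospan\{x_t:t\in T\}$. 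For any $\sg\in[T]$, its first coordinate $\alpha$ lies in $p[R]=\mathcal{C}$ (so in particular is strictly increasing), and $(x_{\sg|n})_{n\geq 1}=(u_{\alpha(0)},u_{\alpha(1)},\dots)$ is a subsequence of $(u_n)$, hence a normalized bi-monotone Schauder basic sequence; thus $\stblng$ is a Schauder tree basis and $X_\sg=\ospan\{u_{\alpha(k)}:k\in\nn\}$. This gives (iii): $X_\sg\in\widehat{\aaa'}$, so $X_\sg\cong Y$ for some $Y\in\aaa'$. For (ii): given $Y\in\aaa'$, pick $\alpha\in\mathcal{C}$ with $\ospan\{u_{\alpha(k)}:k\in\nn\}\cong Y$ and a branch $\sg\in[T]$ whose first coordinate is $\alpha$; then $X_\sg\cong Y$.

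I expect the substantive part to be the descriptive-set-theoretic bookkeeping of the middle step: checking that the ``span'' map is Borel for the Effros--Borel structure and that the isomorphism relation is analytic, and noticing that one must work with $\widehat{\aaa'}$ rather than $\aaa'$ itself (the isomorphic copies $\ospan\{u_{\alpha(k)}:k\in\nn\}$ manufactured inside $U$ need not literally belong to $\aaa'$). The Banach-space input — the existence of a normalized bi-monotone universal basis, and the stability of ``normalized bi-monotone Schauder basic'' under passing to subsequences — is soft, and the passage from an analytic subset of $\nn^\nn$ to the body of a pruned tree on $\nn\times\nn$ is completely standard.
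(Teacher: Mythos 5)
Your argument is correct, but note that the paper itself does not prove Lemma \ref{l41}: it is quoted from \cite{AD} (Proposition 83) and \cite{D} (Lemma 32), where the Schauder tree basis is manufactured inside $C[0,1]$ by coding the analytic set of normalized bi-monotone Schauder basic sequences whose closed spans are isomorphic to members of $\aaa'$ (a continuous surjection from $\nn^\nn$ onto this set, together with a stabilization along finite initial segments, is what produces the tree on $\nn\times\nn$ and the vectors $x_t$). Your route is genuinely different in its Banach-space input: by taking the vectors $x_t$ to be literal elements of Pe{\l}czy\'{n}ski's universal basis $(u_n)$ (renormed to be normalized and bi-monotone), condition (b) of Definition \ref{d31} holds automatically for every branch, no perturbation argument is needed, and the whole problem is reduced to the analyticity of the set $\mathcal{C}$ of strictly increasing codes, which you get from the Borelness of the span map and the analyticity of the isomorphism relation on $\sbs$; the tree is then just a pruned tree on $\nn\times\nn$ projecting onto $\mathcal{C}$. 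What this buys is a shorter, cleaner proof at the price of invoking the universal basis theorem; what the cited approach buys is self-containment in $C[0,1]$ and flexibility (it adapts to variants where one wants quantitative or isometric control, or where no universal object is available). Two small points you should make explicit: you need the Schauder (not only the unconditional) version of Pe{\l}czy\'{n}ski's theorem, namely that every Schauder basic sequence in any Banach space is equivalent to a subsequence of $(u_n)$ --- this is indeed in \cite{P}, and equivalence and universality survive the bi-monotone renorming of $U$, which also leaves $(u_n)$ normalized; and you should dispose of the degenerate case $\aaa'=\varnothing$ separately, since then $\mathcal{C}=\varnothing$ and there is no nonempty pruned tree to take (for nonempty $\aaa'$ your observation that pruning does not change the body, so that every branch projects into $\mathcal{C}$ and in particular has strictly increasing first coordinate, settles (i)--(iii) as you state).
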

We are now ready to proceed to the proof of Theorem \ref{int1}.
So, let $\aaa$ be an analytic subset of $\us$. We apply Proposition
\ref{p21} and we get a subset $\aaa'$ of $\sbs$ with the following
properties.
\begin{enumerate}
\item[(a)] The set $\aaa'$ is analytic.
\item[(b)] Every $Y\in\aaa'$ has a Schauder basis.
\item[(c)] Every $Y\in\aaa'$ is unconditionally saturated.
\item[(d)] For every $X\in\aaa$ there exists $Y\in\aaa'$
such that $Y$ contains an isometric copy of $X$.
\end{enumerate}
By (a) and (b) above, we apply Lemma \ref{l41} to the set
$\aaa'$ and we get a Schauder tree basis $\stblng$ satisfying
the following.
\begin{enumerate}
\item[(e)] For every $Y\in\aaa'$ there exists $\sg\in [T]$ with
$Y\cong X_\sg$.
\item[(f)] For every $\sg\in [T]$ there exists $Y\in\aaa'$
such that $X_\sg\cong Y$.
\end{enumerate}
Consider the $\ell_2$ Baire sum $\txtwo$ of this Schauder tree
basis $\mathfrak{X}$. We claim that the space $\txtwo$ is the
desired one. Indeed, recall first that $\txtwo$ has a Schauder basis.
Moreover, by  (d) and (e) above we see that $\txtwo$ contains an
isomorphic copy of every $X\in\aaa$.

What remains is to check that $\txtwo$ is unconditionally saturated.
To this end, let $Z$ be an arbitrary subspace of $\txtwo$. We have to
show that the space $Z$ contains an unconditional basic sequence. We
distinguish the following (mutually exclusive) cases.
\medskip

\noindent \textsc{Case 1.} \textit{The subspace $Z$ is not $X$-singular.}
In this case, by definition, there exist $\sg\in [T]$ and a further
subspace $Z'$ of $Z$ such that the operator $P_\sg:Z'\to \xxx_\sg$ is
an isomorphic embedding. By (f) and (c) above, we get that $Z'$
must contain an unconditional basic sequence.
\medskip

\noindent \textsc{Case 2.} \textit{The subspace $Z$ is $X$-singular.}
By Theorem \ref{t37}, we see that in this case the subspace $Z$ must
also contain an unconditional basic sequence.
\medskip

\noindent By the above, it follows that $\txtwo$ is unconditionally
saturated. The proof of Theorem \ref{int1} is completed.

%-------------------------------------------------------------------%
%                           Bibliography                            %
%-------------------------------------------------------------------%

\end{document}